\newcommand{\bea}{\begin{eqnarray}}
\newcommand{\eea}{\end{eqnarray}}
\newcommand{\bna}{\begin{eqnarray*}}
\newcommand{\ena}{\end{eqnarray*}}
\numberwithin{equation}{section}
\theoremstyle{plain}
\newtheorem{lemma}{Lemma}[section]
\newtheorem{theorem}[lemma]{Theorem}
\theoremstyle{definition}
\newtheorem{remark}{Remark}
\renewcommand{\Re}{\operatorname{Re}}
\renewcommand{\Im}{\operatorname{Im}}
\begin{document}

\title{On some estimates involving Fourier coefficients of Maass cusp forms}

	\author{Qingfeng Sun}
	\address{School of Mathematics and Statistics, Shandong University, Weihai\\Weihai, Shandong 264209, China}
	\email{qfsun@sdu.edu.cn}

    \author{Hui Wang}
    \address{Department of Mathematics, Shandong University, Jinan 250100, China}
    \email{wh0315@mail.sdu.edu.cn}

\subjclass[2000]{11F30, 11F11, 11L07}

\keywords{Maass cusp form, exponential sums, Fourier coefficients}

\begin{abstract}
Let $f$ be a Hecke-Maass cusp form for $\rm SL_2(\mathbb{Z})$
with Laplace eigenvalue $\lambda_f(\Delta)=1/4+\mu^2$ and let $\lambda_f(n)$ be its $n$-th
normalized Fourier coefficient.
It is proved that, uniformly in $\alpha, \beta \in \mathbb{R}$,
$$
\sum_{n \leq X}\lambda_f(n)e\left(\alpha n^2+\beta n\right)
\ll X^{7/8+\varepsilon}\lambda_f(\Delta)^{1/2+\varepsilon},
$$
where the implied constant depends only on $\varepsilon$. We also consider
the summation function of $\lambda_f(n)$ and under the Ramanujan conjecture
we are able to prove
$$
\sum_{n \leq X}\lambda_f(n)\ll
X^{1/3+\varepsilon}\lambda_f(\Delta)^{4/9+\varepsilon}
$$
with the implied constant depending only on $\varepsilon$.
\end{abstract}

\thanks{Qingfeng Sun is partially supported by the National Natural Science Foundation
of China (Grant Nos. 11871306 and 12031008)}

\maketitle

\section{Introduction}

The Fourier coefficients of automorphic forms contain many
mysterious properties, especially its oscillation properties,
which have been intensively studied by many number theorists.
Let $\lambda_F(n)$ be the normalized
Fourier coefficients of an automorphic
form $F$ on $\rm GL_n$.
Among the many criteria for evaluating the nature of oscillation,
the summation function $\sum_{n\leq X}\lambda_F(n)$ is certainly the most basic one,
and the related exponential sum $\sum_{n\leq X}\lambda_F(n)e\left(g(n)\right)$, where
as usual, $e(x)=e^{2\pi i x}$ and $g(n)$ is a real-valued function, is another important target,
with obvious application indications. In this paper, we are concerned with the oscillating behavior of the Fourier coefficients of
cusp forms on the full modular group. More precisely,
let $f$ be a holomorphic Hecke cusp form of weight $k$ or a Hecke-Maass cusp form
of Laplace eigenvalue $\lambda_f(\Delta)=1/4+\mu^2$ $(\mu>0)$ for $\rm SL_2(\mathbb{Z})$ with
normalized Fourier coefficients $\lambda_f(n)$.
Then for $f$ holomorphic, one has the Fourier expansion
\bna
f(z)=\sum_{n\geq 1}\lambda_f(n) n^{(k-1)/2}e(nz), \quad \Im(z)>0,
\ena
while for $f$ a Maass form, we can write its Fourier expansion as
\bna
f(z)=\sqrt{y}\sum_{n\neq 0}\lambda_f(n)K_{i\mu}(2\pi |n|y)e(nx),
\ena
where $K_{i\mu}$ is the modified Bessel function of the third kind.
The famous Ramanujan--Petersson conjectures assert that
$\lambda_f(n)\ll_{\varepsilon} n^{\varepsilon}$ for any $\varepsilon>0$.
This was proved by Deligne \cite{Deligne} for $f$ holomorphic.
For $f$ a Maass cusp form, the best result is $\lambda_f(n)\ll_{\varepsilon} n^{7/64+\varepsilon}$
due to Kim and Sarnak \cite{KS}.
By Rankin--Selberg theory (see \cite[Proposition 19.6]{DFI2}), we have the following average result
\bea\label{GL2: Rankin Selberg}
\sum_{n\leq X}|\lambda_f(n)|^2\ll_{\varepsilon}X(X|\mu|)^{\varepsilon}.
\eea

The Rankin-Selberg's estimate in \eqref{GL2: Rankin Selberg} shows that the Fourier coefficients $\lambda_f(n)$
behave like constants on average. However, as $n$ grows $\lambda_f(n)$ in fact vary greatly
in sign, which can be seen in the estimate (see \cite{Epstein, Hafner})
\bna
\sum_{n\leq X}\lambda_f(n)e(n \alpha)\ll_{f} X^{1/2}\log (2X),
\ena
which holds uniformly in $\alpha\in \mathbb{R}$.
It's worth noting that the above estimate depends on the form $f$, whereas in certain applications
such the subconvexity problem, one may need an explicit dependence on the form $f$.
To this end, Godber \cite{Godber} proved that for any $\alpha\in \mathbb{R}$
and any $\varepsilon>0$,
\bea\label{linear}
\sum_{n \leq X}\lambda_f(n)e\left(n\alpha\right)
\ll X^{1/2+\varepsilon}\lambda_f(\Delta)^{1/4+\varepsilon},
\eea
where the implied constant depends only on $\varepsilon$. This is an improvement over the
bound $X^{1/2+\varepsilon}\lambda_f(\Delta)^{1/2+\varepsilon}$ in \cite[Section 8.3]{I}.
For the associated quadratic exponential sums,  Pitt \cite{Pitt} first proved
for any $\alpha, \beta\in \mathbb{R}$
and any $\varepsilon>0$,
\bea\label{P result}
\sum_{n \leq X}\lambda_f(n)e\left(\alpha n^2+\beta n\right)\ll X^{15/16+\varepsilon},
\eea
where the implied constant depends only on the form $f$ and $\varepsilon$.
Later, Liu and Ren \cite{LR} improved the above bound to $X^{7/8+\varepsilon}$.

It is interesting and useful to prove an estimate for the exponential sum in
\eqref{P result}, which does not depend on the form $f$. So the first aim of our paper is
to prove the following.

\begin{theorem}\label{the quadratic exponential sums}
Let $\lambda_f(n)$ be the normalized Fourier coefficients
of a Hecke-Maass cusp form for $\rm SL_2(\mathbb{Z})$
with Laplacian eigenvalue $\lambda_f(\Delta)=1/4+\mu^2$.
For any $\alpha, \beta \in \mathbb{R}$ and any $\varepsilon>0$, we have
\[
\sum_{n \leq X}\lambda_f(n)e\left(\alpha n^2+\beta n\right)\ll
X^{7/8+\varepsilon}\lambda_f(\Delta)^{1/2+\varepsilon},
\]
where the implied constant depends only on $\varepsilon$.
\end{theorem}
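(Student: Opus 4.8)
The plan is to combine the $\mathrm{GL}_2$ Voronoi summation formula with the classical treatment of the quadratic phase through rational approximation and Gauss sums, while tracking the dependence on $\mu$ through the Bessel kernel. First I would insert a smooth dyadic partition of unity, so that it suffices to bound $S=\sum_n \lambda_f(n) V(n/X) e(\alpha n^2+\beta n)$ for a fixed smooth bump $V$ supported on $[1,2]$. By Dirichlet's approximation theorem I write $\alpha = a/q+\eta$ with $(a,q)=1$, $1\le q\le Q$ and $|\eta|\le 1/(qQ)$, where $Q$ is a parameter to be optimized at the end. Splitting $n$ into residue classes modulo $q$ (equivalently, detecting the class $n\equiv r \pmod q$ by additive characters) turns $e(an^2/q)$ into the fixed phases $e(ar^2/q)$ and reduces $S$ to a weighted combination, with quadratic Gauss-sum coefficients $\sum_r e((ar^2-br)/q)$, of the arithmetic-progression sums
\[
T_b=\sum_n \lambda_f(n)\,e\!\left(\frac{bn}{q}\right) V\!\left(\frac nX\right) e\!\left(\eta n^2+\beta n\right),
\qquad b \bmod q,
\]
in which the quadratic phase $e(\eta n^2)$ is now mild because $|\eta| X^2\le X^2/(qQ)$ is controlled once $Q$ is large.

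The second step is to apply the $\mathrm{GL}_2$ Voronoi summation formula to each $T_b$. This replaces $T_b$ by a dual sum $\sum_m \lambda_f(m)\,e(\mp \overline{b} m/q)\,\mathcal{G}^\pm(m/q^2)$, where $\mathcal{G}^\pm$ is the Hankel-type transform of the smooth weight $g(x)=V(x/X)e(\eta x^2+\beta x)$ against a Bessel kernel built from $J_{2i\mu}$ and $Y_{2i\mu}$ (or $K_{2i\mu}$). Carrying out the sum over $b$, the Gauss-sum coefficient and the character $e(\mp \overline{b} m/q)$ combine into a complete exponential sum of Sali\'e type, which I would evaluate, or bound via Weil; its square-root cancellation is what compensates the $\sqrt q$ size of the individual Gauss sums.

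The third and most delicate step is the stationary phase analysis of $\mathcal{G}^\pm(m/q^2)$, carried out uniformly in $\mu$. Writing the Bessel kernel in its oscillatory regime as an exponential of argument $\sim \sqrt{mx}/q$, the total phase is $\eta x^2+\beta x\pm 2\sqrt{mx}/q$; its stationary point pins the effective dual length to $m\asymp q^2(2\eta X+\beta)^2 X$, while the transition of $J_{2i\mu},Y_{2i\mu}$ near argument $2\mu$ forces $\mathcal{G}^\pm$ to be negligible unless $\sqrt{mX}/q\gg \mu$, i.e. unless $|2\eta X+\beta|X\gg \mu$. This exponential-decay threshold is precisely the mechanism that produces the factor $\lambda_f(\Delta)^{1/2}\asymp \mu$: it both localizes $m$ and controls the amplitude of the transform through the uniform (Olver-type) asymptotics of Bessel functions of large complex order across the transition range. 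Having the size and support of $\mathcal{G}^\pm$ and the complete sums in hand, I would bound the dual sum by Cauchy--Schwarz together with the Rankin--Selberg estimate \eqref{GL2: Rankin Selberg} for $\sum|\lambda_f(m)|^2$, sum over the residues and over $q\le Q$, and finally choose $Q$ to balance the contribution of small $q$, where the dual sum is short, against that of large $q$; this optimization is designed to yield the exponent $X^{7/8+\varepsilon}$ and the weight $\lambda_f(\Delta)^{1/2+\varepsilon}$.

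I expect the Bessel analysis in the third step to be the main obstacle: one needs asymptotics for $\mathcal{G}^\pm(m/q^2)$ that are simultaneously uniform in $m$, $q$, $\eta$, $\beta$ and, above all, in the order $2i\mu$, including the regime where the Bessel argument is comparable to $\mu$ and the standard oscillatory expansion degenerates. Getting the exact power $\mu^{1}$ out of this analysis, rather than a larger power, and matching it against the arithmetic saving from the Sali\'e-type sums so that the $X$-exponent comes out as $7/8$, is the technical heart of the argument; the treatment of the linear parameter $\beta$ and of the moduli $q$ sharing common factors with $b$, which lower the conductor in Voronoi, are comparatively routine but must be carried along uniformly.
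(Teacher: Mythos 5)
Your plan is essentially a single-pronged version of what the paper does only in its \emph{small-$q$} regime, and it cannot work for all $q$. Every step of your chain produces a bound that \emph{grows} with $q$: the Gauss sums contribute $\sqrt q$, the dual length after Voronoi is $\asymp q^2\max\{\mu,\,X|2\eta X+\beta|\}^2/X$ (writing $\alpha=a/q+\eta$ as you do), and absolute values plus Rankin--Selberg on the dual sum then give $S\ll X^{\varepsilon}q\left(\mu+X|2\eta X+\beta|\right)\ll X^{\varepsilon}\left(q\mu+X^2/Q+qX|\beta|\right)$. But Dirichlet's theorem \eqref{Dirichlet's theorem} hands you a single $q\le Q$ that you do not control (there is no ``sum over $q\le Q$'' to balance, contrary to what your last step suggests); your estimate must be acceptable for the worst case $q\asymp Q$, and then no choice of $Q$ does better than $X\mu^{1/2}$ (balancing $Q\mu$ against $X^2/Q$), which is worse than trivial. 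Even if your small-$q$ machinery reached the quality of the paper's Lemma \ref{the eatimate relative to Pitt}, namely $X^{1/2+\varepsilon}q^{1/2}\lambda_f(\Delta)^{1/4+\varepsilon}+X^{3/2+\varepsilon}Q^{-1/2}\lambda_f(\Delta)^{1/4+\varepsilon}$, using it for \emph{all} $q\le Q$ forces $Q=X$ and the trivial bound $X^{1+\varepsilon}\lambda_f(\Delta)^{1/4+\varepsilon}$. The missing idea is a second mechanism whose bound \emph{improves} as $q$ grows. The paper supplies it with the DFI delta method \eqref{DFI's}: there Voronoi is applied to a smooth, phase-free $m$-sum, the quadratic phase $e(\alpha n^2+\beta n)$ stays attached to the $n$-sum together with Kloosterman sums, and Lemma \ref{Pitt} combined with Tolev's and Karatsuba's lemmas (Lemmas \ref{new series}, \ref{x series}) converts poor approximability of $2\alpha$ (i.e.\ large $q$) into the saving visible in \eqref{S(X,alpha,beta)}, a bound that decreases in $q$; the two prongs are then glued at $q\asymp X^{3/4}\lambda_f(\Delta)^{1/2}$ with $Q=X^{5/4}$.

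Two further steps you dismiss as routine are in fact serious. First, $\beta$: in your setup it sits inside the Voronoi weight, and since you can only normalize $|\beta|\le 1/2$, for $|\beta|\asymp 1$ your own stationary-point computation puts the dual length at $m\asymp q^2X$; absolute values then give $\approx qX$, again worse than trivial, and rescuing this would require exploiting the oscillation of $\mathcal{G}^{\pm}(m/q^2)$ in $m$ against the Sali\'e sums --- an entirely new argument. The paper never meets this problem: in the large-$q$ prong $\beta$ stays in the $n$-sum and the bound \eqref{sum T} of Lemma \ref{Pitt} is uniform in $\beta$, while in the small-$q$ prong one strips $e(\eta n^2)$ by partial summation so that $\beta$ merely shifts the frequency of a linear twist, to which Godber's bound \eqref{smaller q} applies uniformly. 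Second, your Bessel claim --- that $\mathcal{G}^{\pm}$ is negligible unless $\sqrt{mX}/q\gg\mu$ --- is false for Maass forms: there is no exponential decay below the transition point for order $2i\mu$ (that is a feature of the holomorphic kernel $J_{k-1}$). What actually happens (Lemma \ref{integral-lemma-1}, part (2)) is that when the weight oscillates more slowly than $\mu$ the transform localizes at the transition range $m\asymp q^2\mu^2/X$, where it has size $\asymp(mX)^{1/2}/q\asymp\mu$; it is the length and size of this transition contribution, not a decay threshold, that produces the factor $\lambda_f(\Delta)^{1/2}$ in the theorem.
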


\begin{remark}
The methods of proving Theorem \ref{the quadratic exponential sums} can also be adopted for
holomorphic forms. Let $f$ be a holomorphic cusp form of weight $k$ for $\rm SL_2(\mathbb{Z})$
with normalized Fourier coefficients $\lambda_f(n)$. It would have,
uniformly in $f$ and $\alpha, \beta \in \mathbb{R}$,
\[
\sum_{n \leq X}\lambda_f(n)e\left(\alpha n^2+\beta n\right)\ll
X^{7/8+\varepsilon}k^{1+\varepsilon}.
\]
\end{remark}

The proof for Theorem \ref{the quadratic exponential sums} is based on the ideas introduced by
\cite{Pitt}. That is, taking two different approaches according to the
Dirichlet approximation of $\alpha$.
Let $Q\geq 1$ be any given number. By Dirichlet's theorem,
for any $\alpha\in\mathbb{R}$,
there exists a reduced rational number $\ell/q$ with $1\leq q\leq Q$
such that
\bea\label{Dirichlet's theorem}
\left|\alpha-\frac{\ell}{q}\right|\leq\frac{1}{qQ}.
\eea
For ``larger" $q$, i.e., the oscillation of the exponential function is large,
we separate the Fourier coefficients $\lambda_f(n)$ and the exponential function by the $\delta$-method
as in Pitt \cite{Pitt} and deal the resulting sum using techniques in Liu and Ren \cite{LR}. However, with the demand that we shall make clear the dependence on $f$ in mind,
we need to adopt a different form of the $\delta$-method, i.e., the $\delta$-method due to
Duke, Friedlander and Iwaniec (see Section 2.5). For ``smaller" $q$, we deal with it as in
Pitt \cite{Pitt}, except for applying the result of Godber in \eqref{linear}.

Another purpose of our paper is to consider the summation function of $\lambda_f(n)$, i.e.,
\bna
\sum_{n\leq X}\lambda_f(n).
\ena
For holomorphic cusp forms, this was first studied by
Hecke \cite{Hecke} in 1927.
Later, Walfisz \cite{Walfisz} proved the following estimate,
i.e.,
\bna
\sum_{n\leq X}\lambda_f(n)\ll_{f,\vartheta} X^{\frac{1+\vartheta}{3}},
\ena
where $\vartheta$ is exponent towards the Ramanujan-Petersson conjecture, i.e.,
$|\lambda_f(n)|\leq n^\vartheta$.
Then by Deligne \cite{Deligne}, one has the estimate $O_{f,\varepsilon}(X^{1/3+\varepsilon})$
for any $\varepsilon>0$.
Subsequently, Hafner and Ivi\'{c} \cite{HI}
removed the factor $X^{\varepsilon}$ of Deligne's result
and obtained the bound $O_f(X^{1/3})$.
The current best record is $O_f\left(X^{1/3}(\log X)^{-0.1185}\right)$ due to Wu \cite{Wu}.
For the case of Maass cusp forms, there are fewer results. Assuming the Ramanujan-Petersson conjecture, one also has
the estimate (see for example \cite{FI})
\bea\label{depend on f}
\sum_{n\leq X}\lambda_f(n)\ll_{f,\varepsilon} X^{1/3+\varepsilon}.
\eea
Other interesting results without assuming the Ramanujan-Petersson conjecture
but weaker than \eqref{depend on f} can be found in
Hafner and Ivi\'{c} \cite{HI}, L\"{u} \cite{L1},
Jiang and L\"{u} \cite{JL} and some references therein.

Notice that all the above mentioned estimates depend on the form $f$.
So the second purpose of this paper is to strengthen the estimate in \eqref{depend on f}
by making the dependence on the form $f$ explicit. Our result is the following.
\begin{theorem}\label{coefficient sum}
Let $\lambda_f(n)$ be the normalized Fourier coefficients
of a Hecke-Maass cusp form for $\rm SL_2(\mathbb{Z})$
with Laplacian eigenvalue $\lambda_f(\Delta)=1/4+\mu^2$.
Under the Ramanujan conjecture, we have, for any $\varepsilon>0$,
\[
\sum_{n \leq X}\lambda_f(n)\ll X^{1/3+\varepsilon}\lambda_f(\Delta)^{4/9+\varepsilon},
\]
where the implied constant depends only on $\varepsilon$.
\end{theorem}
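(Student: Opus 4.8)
The plan is to estimate the smoothed sum by the Voronoi summation formula for Maass forms and then balance the length of the resulting dual sum against the cost of smoothing, using the Ramanujan bound $\lambda_f(m)\ll_\varepsilon m^\varepsilon$ to control the dual side termwise. First I would replace the sharp cutoff by a smooth majorant/minorant $V_\eta$ that equals $1$ on $[0,X(1-\eta)]$, vanishes on $[X(1+\eta),\infty)$, and satisfies $V_\eta^{(j)}\ll(\eta X)^{-j}$. Under the Ramanujan conjecture the transition range contributes $\sum_{|n-X|\leq\eta X}|\lambda_f(n)|\ll\eta X^{1+\varepsilon}$, so it remains to treat $\sum_n\lambda_f(n)V_\eta(n)$.

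To this sum I would apply the level-one Maass-form Voronoi summation formula with trivial additive character, which converts it into a dual sum $\sum_m\lambda_f(m)W(m)$, where $W(m)=\int_0^\infty V_\eta(x)\Phi_\mu(4\pi\sqrt{mx})\,dx$ and $\Phi_\mu$ is the Bessel kernel built from $J_{\pm2i\mu}$ and $K_{2i\mu}$. The heart of the argument is a uniform-in-$\mu$ analysis of $W(m)$. Using the uniform asymptotics for Bessel functions of purely imaginary order (no turning point on the positive axis, only a change of regime at argument $\asymp\mu$), I expect three ranges: for $mX\gg\mu^2$ the kernel is oscillatory with amplitude $\asymp(mx)^{-1/4}$, and integrating by parts against the edge of $V_\eta$ gives $W(m)\asymp X^{1/4}m^{-3/4}e(\pm2\sqrt{mX})$; for $mX\lesssim\mu^2$ the amplitude saturates at $\asymp\mu^{-1/2}$ and $W(m)$ is of controlled size $\asymp X\mu^{-3/2}$; and for $m$ beyond the truncation point $M\asymp(\eta^2X)^{-1}$ repeated integration by parts through the transition region of $V_\eta$ renders $W(m)$ negligible.

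With these estimates and $\lambda_f(m)\ll m^\varepsilon$ in hand, I would bound the dual sum in pieces: the oscillatory range $\mu^2/X\lesssim m\leq M$ trivially by $X^{1/4}\sum_{m\leq M}m^{-3/4}\ll X^{1/4}M^{1/4+\varepsilon}$, and the low range $m\lesssim\mu^2/X$ by the saturated weight times the number of terms. Collecting the three contributions gives a bound of the shape $\eta X^{1+\varepsilon}+X^{1/4}M^{1/4+\varepsilon}+(\text{low range})$ with $M\asymp(\eta^2X)^{-1}$, the balance depending on whether $M$ lies above or below the transition scale $\mu^2/X$. Optimizing the smoothing width $\eta$ (equivalently $M$) against these terms produces the $X^{1/3}$ saving, and carrying the $\mu$-factors through the optimization yields the stated power $\lambda_f(\Delta)^{4/9+\varepsilon}$.

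The main obstacle I anticipate is exactly this uniform analysis of the Bessel transform $W(m)$: one must have asymptotics for the Voronoi kernel $\Phi_\mu$ that are explicit and uniform in both the argument and the spectral parameter $\mu$, in particular across the transition $mX\asymp\mu^2$, and one must track the $\mu$-dependence of every amplitude and error term so that the final constant depends only on $\varepsilon$. The role of the Ramanujan conjecture is confined to two trivial estimates, the short-interval transition error and the termwise bound on the dual sum, so that no cancellation among the $\lambda_f(m)$ is required; the saving comes entirely from the oscillation encoded in $W(m)$ and the balance between smoothing and dual length.
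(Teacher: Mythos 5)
Your plan's critical step --- the uniform analysis of $W(m)$ in the range $mX\lesssim\mu^2$ --- rests on a false premise, and this is a genuine gap. It is true that $J_{\pm 2i\mu}$ has no turning point on the positive axis, but the Voronoi kernel also contains the $K$-part $\cosh(\pi\mu)K_{2i\mu}(y)$ (with the trivial additive character both $\pm$ terms of the formula are present), and in Liouville normal form the $K_{2i\mu}$-equation reads $u''+\bigl(\tfrac{4\mu^2+1/4}{y^2}-1\bigr)u=0$: it \emph{does} have a turning point, at $y\approx 2\mu$. Near that point the kernel is Airy-like, not oscillatory, so integration by parts gains nothing there. Concretely, for $m\asymp\mu^2/X$ the argument $4\pi\sqrt{mx}$ stays in the turning-point region throughout the support of $V_\eta$ (width $\asymp X\mu^{-2/3}$ in $x$, amplitude $\asymp\mu^{-1/3}$), and one finds $W(m)\asymp X/\mu$, not your claimed $X\mu^{-3/2}$. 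You can confirm this on the Mellin side: by Stirling, the kernel $\rho_f^{+}(s)$ of Lemma \ref{GL2 Voronoi formula} satisfies $\rho_f^{+}(s)\approx 2(\mu/2)^{1+2s}$ for bounded $s$, so Mellin inversion gives $\Psi^{+}(m)\approx\tfrac{\mu}{2\pi}V\bigl(\mu^2/(4\pi^2mX)\bigr)$ --- a non-oscillating bump of height $\asymp\mu$ at the conductor scale, matching part (3) of Lemma \ref{integral-lemma-1}, whose bound $(xX)^{1/2+\varepsilon}$ is sharp at $xX\asymp\mu^2$. Consequently the low-range contribution to your dual sum is, under Ramanujan and with no cancellation exploited, $\asymp(\mu^2/X)\cdot(X/\mu)=\mu^{1+\varepsilon}$, a factor $\mu^{1/2}$ larger than your estimates allow; this resonance at the conductor scale is exactly what makes the $\mu$-aspect nontrivial, and your argument erases it.

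The error feeds into your final step, which in any case is asserted rather than carried out: with your bounds you would get $X^{1/3+\varepsilon}+\mu^{1/2+\varepsilon}$, and no choice of $\eta$ turns either that or the corrected $X^{1/3+\varepsilon}+\mu^{1+\varepsilon}$ into the shape $X^{1/3+\varepsilon}\lambda_f(\Delta)^{4/9+\varepsilon}$. In the paper the exponent $4/9$ arises from an entirely different mechanism: the sum is written as a contour integral of $L(s,f)$, the high frequencies $|t|\gg\mu^{1+\varepsilon}$ are treated by the functional equation and stationary phase (the analogue of your oscillatory range), while the low frequencies are bounded on the critical line using the Jutila--Motohashi subconvexity bound $L(\tfrac12+it,f)\ll(|t|+\mu)^{1/3+\varepsilon}$, and $4/9$ comes from balancing $\lambda_f(\Delta)^{1/2-\theta}$ against $\lambda_f(\Delta)^{\theta/2+5/12}$; your proposal has no substitute for this input. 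Your plan is in fact salvageable without subconvexity: the corrected bound $X^{1/3+\varepsilon}+\mu^{1+\varepsilon}$ implies the theorem because $\mu^{1+\varepsilon}\leq X^{1/3}\mu^{8/9+\varepsilon}$ precisely when $\mu\leq X^{3}$, while for $\mu\geq X^{3/4}$ the trivial Ramanujan bound $X^{1+\varepsilon}$ already beats the stated estimate. But that final case distinction is a necessary missing step, and as written the kernel analysis at the conductor scale is wrong.
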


\medskip

\noindent{\bf Notation.}
Throughout the paper, $\varepsilon$ and $A$ are arbitrarily small and arbitrarily large positive
numbers, respectively,  which may be different at each occurrence. As usual,
$e(x)=e^{2\pi ix}$ and  the symbol $n\sim X$ means $X<n\leq 2X$.

\section{Preliminaries}
\subsection{Maass cusp forms for $\rm GL_2$}
Let $f$ be a Hecke-Maass cusp form for $\rm SL_2(\mathbb{Z})$
with Laplace eigenvalue $1/4+\mu^2$, with the normalized Fourier coefficients $\lambda_f(n)$.
For $\Re(s)>1$, the $L$-function associated to $f$ is given by
\bea\label{Dirichlet series}
L(s,f)=\sum_{n=1}^{\infty}\frac{\lambda_f(n)}{n^{s}},
\eea
which satisfies the functional equation
\bea\label{functional equation}
L(1-s,f)=(-1)^{\eta}\gamma(s)L(s,f),
\eea
where $\eta=0$ or 1 according as $f$ is even or odd, and
\bea\label{Gamma}
\gamma(s)=\pi^{1-2s}\prod_{\pm }
\Gamma\left(\frac{s+\eta\pm i\mu}{2}\right)\Gamma\left(\frac{1-s+\eta\pm i\mu}{2}\right)^{-1}.
\eea
\subsection{Summation formulas}
We first recall the Poisson summation formula over an arithmetic progression.
\begin{lemma}Let $\beta\in\mathbb{Z}$
and $c\in\mathbb{Z}_{\geq1}$. For a Schwartz function $f:\mathbb{R}\rightarrow\mathbb{C}$, we have
\bna
\underset{\begin{subarray}{c}n\in\mathbb{Z}
\\ n\equiv \beta\bmod c\end{subarray}}{\sum}f(n)
=\frac{1}{c}\sum_{n\in\mathbb{Z}}
\widehat{f}\left(\frac{n}{c}\right)
e\left(\frac{n\beta}{c}\right),
\ena
where $\widehat{f}(y)=\int_{\mathbb{R}}f(x)e(-xy)\mathrm{d}x$
is the Fourier transform of $f$.
\end{lemma}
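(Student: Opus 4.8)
The plan is to deduce this arithmetic-progression Poisson formula directly from the classical Poisson summation formula $\sum_{m\in\mathbb{Z}}g(m)=\sum_{n\in\mathbb{Z}}\widehat{g}(n)$ via an affine change of variables, so that the proof amounts to tracking how the Fourier transform behaves under dilation and translation.

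First I would reparametrize the summation condition. For $n\in\mathbb{Z}$, the congruence $n\equiv\beta\bmod c$ is equivalent to writing $n=\beta+cm$ for a unique $m\in\mathbb{Z}$, so the left-hand side becomes $\sum_{m\in\mathbb{Z}}f(\beta+cm)$. I then set $g(x)=f(\beta+cx)$. Since $x\mapsto\beta+cx$ is an affine bijection of $\mathbb{R}$ and $f$ is Schwartz, $g$ is again a Schwartz function; in particular both sides of the classical Poisson summation formula converge absolutely and the formula is valid when applied to $g$.

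Next I would compute $\widehat{g}$ explicitly. Making the substitution $u=\beta+cx$, $\mathrm{d}x=\mathrm{d}u/c$, one finds
\[
\widehat{g}(n)=\int_{\mathbb{R}}f(\beta+cx)\,e(-nx)\,\mathrm{d}x
=\frac{1}{c}\,e\!\left(\frac{n\beta}{c}\right)\int_{\mathbb{R}}f(u)\,e\!\left(-\frac{nu}{c}\right)\mathrm{d}u
=\frac{1}{c}\,e\!\left(\frac{n\beta}{c}\right)\widehat{f}\!\left(\frac{n}{c}\right).
\]
Substituting this into $\sum_{m\in\mathbb{Z}}g(m)=\sum_{n\in\mathbb{Z}}\widehat{g}(n)$ gives exactly the asserted identity. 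The only point requiring any care is confirming that the Schwartz class is preserved under the affine substitution, which both justifies applying Poisson to $g$ and supplies the absolute convergence needed to identify the two sides term by term, together with correctly retaining the dilation factor $1/c$ and the modulation $e(n\beta/c)$ when transforming $g$. Beyond this routine verification there is no genuine analytic obstacle.
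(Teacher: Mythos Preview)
Your argument is correct. The paper does not give its own proof but simply cites \cite[Eq.~(4.24)]{IK}; your derivation via $g(x)=f(\beta+cx)$ and the classical Poisson summation formula is exactly the standard route underlying that reference.
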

\begin{proof}
See e.g. \cite[Eq.(4.24)]{IK}.
\end{proof}
We have the following
Voronoi formula for $\rm SL_2(\mathbb{Z})$ (see \cite[Eqs. (1.12), (1.15)]{MS}).

\begin{lemma}\label{GL2 Voronoi formula}
Let $\varphi(x)$ be a smooth function compactly supported on $\mathbb{R^+}$.
Let $a, \overline{a}, c\in\mathbb{Z}$ with $c\neq0, (a,c)=1$ and
$a\overline{a}\equiv1\;(\text{{\rm mod }} c)$. Then
\bna
\sum_{m=1}^{\infty}\lambda_f(m)e\left(\frac{am}{c}\right)\varphi(m)
=c\sum_\pm\sum_{m=1}^{\infty}\frac{\lambda_f(m)}{m}e\left(\pm\frac{\overline{a}m}{c}\right)
\Psi_\pm\left(\frac{m}{c^2}\right),
\ena
where for $\sigma>-1$,
\bea\label{GL2 integral-1}
\Psi_\pm(x)=\frac{1}{4\pi^2 i}\int_{(\sigma)}(\pi^2 x)^{-s}\rho_f^{\pm}(s)\widetilde{\varphi}(-s) \mathrm{d}s,
\eea
with
\bea\label{gamma}
\rho_f^{\pm}(s)=\prod_\pm\frac{\Gamma(\frac{1+s\pm i\mu}{2})}
{\Gamma(\frac{-s\pm i\mu}{2})}\pm
\prod_\pm\frac{\Gamma(\frac{2+s\pm i\mu}{2})}
{\Gamma(\frac{1-s\pm i\mu}{2})}.
\eea
Here $\widetilde{\varphi}(s)=\int_0^{\infty}\varphi(u)u^{s-1}\mathrm{d}u$ is the Mellin transform of $\varphi$.
\end{lemma}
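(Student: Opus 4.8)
The plan is to pass to the additively twisted Dirichlet series and exploit its functional equation. Writing $\widetilde\varphi(s)=\int_0^\infty\varphi(u)u^{s-1}\,\mathrm du$ for the Mellin transform, Mellin inversion gives $\varphi(m)=\frac{1}{2\pi i}\int_{(\sigma)}\widetilde\varphi(s)m^{-s}\,\mathrm ds$. Since $\varphi$ is smooth and compactly supported on $\mathbb R^+$, the function $\widetilde\varphi(s)$ is entire and decays faster than any power of $|s|$ in every vertical strip, so for $\sigma$ large the left-hand side of Lemma \ref{GL2 Voronoi formula} becomes
\[
\sum_{m\ge1}\lambda_f(m)e\!\left(\frac{am}{c}\right)\varphi(m)
=\frac{1}{2\pi i}\int_{(\sigma)}\widetilde\varphi(s)\,L_{a/c}(s,f)\,\mathrm ds,
\qquad
L_{a/c}(s,f):=\sum_{m\ge1}\frac{\lambda_f(m)e(am/c)}{m^{s}},
\]
the interchange of sum and integral being justified by absolute convergence of $L_{a/c}(s,f)$ for $\Re s>1$ (via \eqref{GL2: Rankin Selberg}) together with the rapid decay of $\widetilde\varphi$.

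The heart of the matter is a functional equation for $L_{a/c}(s,f)$ relating the twist by $a/c$ to the twist by $\mp\overline a/c$, schematically of the form $L_{a/c}(s,f)=c^{\,1-2s}\sum_{\pm}\rho_f^{\pm}(s)\,L_{\mp\overline a/c}(1-s,f)$ up to an explicit power of $\pi$, with the archimedean factor $\rho_f^{\pm}(s)$ of \eqref{gamma}. I see two natural routes. The first decomposes $e(am/c)$ into Dirichlet characters modulo $c$, reducing $L_{a/c}(s,f)$ to a combination of the multiplicatively twisted $L$-functions $L(s,f\otimes\chi)$; each of these is entire and satisfies the standard functional equation whose root number is a Gauss sum, and Gauss-sum reciprocity converts the $\chi\mapsto\overline\chi$ twist into the inversion $a\mapsto\overline a$ modulo $c$. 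The two summands $\pm$ and the two $\Gamma$-quotients in \eqref{gamma} then arise from the even/odd (equivalently, the two Bessel) components and from the reflection $s\mapsto 1-s$ acting on the gamma factor $\gamma(s)$ of \eqref{functional equation}--\eqref{Gamma}. The second route is that of Miller--Schmid \cite{MS}: realise $L_{a/c}(s,f)$ as a period of the automorphic distribution attached to $f$ and read off the functional equation directly from the transformation of $f$ under the relevant element of $\mathrm{SL}_2(\mathbb Z)$, which bypasses the character bookkeeping at the cost of justifying the distributional boundary manipulations.

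With the functional equation in hand, I would shift the contour from $\Re s=\sigma$ to $\Re s=1-\sigma$, crossing no poles (since $f$ is cuspidal, $L_{a/c}(s,f)$ is entire, and $\widetilde\varphi$ is entire), substitute, and change variables $s\mapsto 1-s$. The factor $c^{\,1-2s}$ combines with $m^{-(1-s)}$ to produce the overall constant $c$ and the argument $m/c^{2}$, the $\Gamma$-quotients assemble into $\rho_f^{\pm}(s)$, the surviving Dirichlet series is $\sum_{m}\lambda_f(m)m^{-1}e(\pm\overline a m/c)$, and the residual Mellin integral (after collecting the powers of $\pi$) is precisely $\Psi_\pm(m/c^2)$ of \eqref{GL2 integral-1}. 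This yields
\[
c\sum_{\pm}\sum_{m\ge1}\frac{\lambda_f(m)}{m}\,e\!\left(\pm\frac{\overline a m}{c}\right)\Psi_\pm\!\left(\frac{m}{c^{2}}\right),
\]
which is exactly the claimed dual sum.

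The main obstacle is precisely the functional equation of the additive twist. Along the character route the delicate point is handling the terms with $(m,c)>1$, where $e(am/c)$ is \emph{not} a clean combination of primitive characters, so one must treat the imprimitive and zero-frequency contributions separately while keeping the archimedean factors and the powers of $c$ and $\pi$ matched exactly to \eqref{gamma}. Along the automorphic-distribution route the obstacle shifts to justifying convergence and the boundary limits for the distribution attached to the non-holomorphic form $f$, and to verifying that the two Bessel kernels coalesce into the single factor $\rho_f^{\pm}(s)$. In either approach it is the bookkeeping of the archimedean factors, together with the tracking of the powers of $c$ and $\pi$, that must be carried out with care.
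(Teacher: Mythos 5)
Note first that the paper does not actually prove this lemma: it is quoted verbatim from Miller--Schmid \cite[Eqs. (1.12), (1.15)]{MS}, so the only ``proof'' in the paper is that citation. Your outer skeleton --- Mellin inversion, a functional equation for the additive twist $L_{a/c}(s,f)$, a pole-free contour shift, and the substitution $s\mapsto 1-s$ to reassemble \eqref{GL2 integral-1} --- is indeed the standard mechanism behind every proof of such a Voronoi formula, and your second suggested route for the functional equation is precisely the Miller--Schmid theory that the paper invokes.

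The genuine gap is that the functional equation, which \emph{is} the entire content of the lemma, is never established. You state it only schematically, and even at that level the bookkeeping is off: unwinding \eqref{GL2 integral-1} and substituting $s\mapsto -s$ shows the required identity is $L_{a/c}(s,f)=\tfrac{1}{2}c^{1-2s}\pi^{2s-1}\sum_{\pm}\rho_f^{\pm}(-s)\,L_{\pm\overline{a}/c}(1-s,f)$, i.e.\ $\rho_f^{\pm}$ evaluated at the reflected argument and paired with the sign $\pm\overline{a}$, not $\rho_f^{\pm}(s)$ paired with $\mp\overline{a}$ as you wrote. More seriously, the route you develop in detail --- decomposing $e(am/c)$ into Dirichlet characters modulo $c$ --- is the one that does not work as described: the Gauss-sum identity $e(am/c)=\phi(c)^{-1}\sum_{\chi}\chi(am)\tau(\overline{\chi})$ holds only for $(m,c)=1$; the terms with $(m,c)>1$ force characters of smaller moduli together with Hecke-relation manipulations of $\lambda_f(m)$; and the characters arising are in general imprimitive, so the functional equations of $L(s,f\otimes\chi)$ do not have the clean form needed to reassemble the exact factors $c^{1-2s}$ and \eqref{gamma}. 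You flag this obstacle yourself, but flagging it does not remove it, and no known proof runs along those lines for general composite $c$. The standard self-contained derivation avoids characters entirely: one Mellin-transforms the Fourier expansion of $f$ along the vertical geodesic ending at the cusp $a/c$ and invokes the automorphy of $f$ under a matrix $\left(\begin{smallmatrix} a & * \\ c & * \end{smallmatrix}\right)\in\mathrm{SL}_2(\mathbb{Z})$, which maps that geodesic to the one ending at $-\overline{a}/c$ and hence directly yields the functional equation relating $L_{a/c}$ to $L_{\mp\overline{a}/c}$, with the $\pm$ terms coming from the positive and negative frequencies in the Maass expansion; this is the argument of \cite{Epstein} and of the classical Voronoi literature, and it is what the distributional formalism of \cite{MS} packages. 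As written, your proposal closes only if its second route is read as ``cite \cite{MS}'' --- which is exactly, and only, what the paper itself does.
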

\subsection{Stirling's formula}
By Stirling asymptotic formula (see \cite[Section 8.4, Eq. (4.03)]{O}),
for $|\arg s|\leq \pi-\varepsilon$, $|s|\gg 1$ and any $\varepsilon>0$,
\bna
\ln \Gamma(s)=\left(s-\frac{1}{2}\right)\ln s-s+\frac{1}{2}\ln(2\pi)
+\sum_{j=1}^{K_1}\frac{B_{2j}}{2j(2j-1)s^{2j-1}}+O_{K_1,\varepsilon}\left(\frac{1}{|s|^{2K_1+1}}\right),
\ena
where $B_j$ are Bernoulli numbers. Thus for $s=\sigma+i\tau$, $\sigma$ fixed and $|t|\geq 2$,
\bea\label{Stirling approximation}
\Gamma(\sigma+i\tau)=\sqrt{2\pi}(i\tau)^{\sigma-1/2}e^{-\pi|\tau|/2}\left(\frac{|\tau|}{e}\right)^{i\tau}
\left(1+\sum_{j=1}^{K_2}\frac{c_j}{\tau^j}+O_{\sigma,K_2,\varepsilon}
\bigg(\frac{1}{|\tau|^{K_2+1}}\bigg)\right),
\eea
where the constants $c_j$ depend on $j,\sigma$ and $\varepsilon$.
Thus for $\sigma\geq -1/2$,
\bea\label{Gamma-2}
\rho_f^{\pm}(\sigma+i\tau)&=&
\prod_\pm\frac{\Gamma(\frac{1+\sigma+i(\tau\pm\mu)}{2})}
{\Gamma(\frac{-\sigma-i(\tau\pm\mu)}{2})}\pm
\prod_\pm\frac{\Gamma(\frac{2+\sigma+i(\tau\pm\mu)}{2})}
{\Gamma(\frac{1-\sigma-i(\tau\pm\mu)}{2})}\nonumber\\
&\ll &(|\tau+\mu||\tau-\mu|)^{\sigma+1/2}.
\eea
\subsection{Estimates for exponential integrals}

Let
\begin{equation*}
 I = \int_{\mathbb{R}} w(y) e^{i \varrho(y)} dy.
\end{equation*}
We need the following evaluation for exponential integrals
which are
 Lemma 8.1 and Proposition 8.2 of \cite{BKY} in the language of inert functions
 (see \cite[Lemma 3.1]{KPY}).

Let $\mathcal{F}$ be an index set, $Y: \mathcal{F}\rightarrow\mathbb{R}_{\geq 1}$ and under this map
$T\mapsto Y_T$
be a function of $T \in \mathcal{F}$.
A family $\{w_T\}_{T\in \mathcal{F}}$ of smooth
functions supported on a product of dyadic intervals in $\mathbb{R}_{>0}^d$
is called $Y$-inert if for each $j=(j_1,\ldots,j_d) \in \mathbb{Z}_{\geq 0}^d$
we have
\bna
C(j_1,\ldots,j_d)
= \sup_{T \in \mathcal{F} } \sup_{(y_1, \ldots, y_d) \in \mathbb{R}_{>0}^d}
Y_T^{-j_1- \cdots -j_d}\left| y_1^{j_1} \cdots y_d^{j_d}
w_T^{(j_1,\ldots,j_d)}(y_1,\ldots,y_d) \right| < \infty.
\ena

\begin{lemma}
\label{lemma:exponentialintegral}
 Suppose that $w = w_T(y)$ is a family of $Y$-inert functions,
 with compact support on $[Z, 2Z]$, so that
$w^{(j)}(y) \ll (Z/Y)^{-j}$.  Also suppose that $\varrho$ is
smooth and satisfies $\varrho^{(j)}(y) \ll H/Z^j$ for some
$H/Y^2 \geq R \geq 1$ and all $y$ in the support of $w$.
\begin{enumerate}
 \item
 If $|\varrho'(y)| \gg H/Z$ for all $y$ in the support of $w$, then
 $I \ll_A Z R^{-A}$ for $A$ arbitrarily large.
 \item If $\varrho''(y) \gg H/Z^2$ for all $y$ in the support of $w$,
 and there exists $y_0 \in \mathbb{R}$ such that $\varrho'(y_0) = 0$ (note $y_0$ is
 necessarily unique), then
 \begin{equation*}
  I = \frac{e^{i \varrho(y_0)}}{\sqrt{\varrho''(y_0)}}
 F(y_0) + O_{A}(  Z R^{-A}),
 \end{equation*}
where $F(y_0)$ is an $Y$-inert function (depending on $A$)  supported
on $y_0 \asymp Z$.
\end{enumerate}
\end{lemma}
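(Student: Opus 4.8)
The plan is to prove the two parts separately: part (1) is a non-stationary phase estimate handled by repeated integration by parts, while part (2) is the genuine stationary phase asymptotic, handled by localizing to the critical point and evaluating a Gaussian integral, with the inert-function bookkeeping carried along at every step. For part (1) I would iterate the identity $e^{i\varrho} = (i\varrho')^{-1}(e^{i\varrho})'$. Since $w$ is compactly supported there are no boundary terms, so a single integration by parts gives $I = \int_{\mathbb{R}} w_1(y) e^{i\varrho(y)}\,dy$ with $w_1 = -i^{-1}\big(w/\varrho'\big)'$. The crucial point is that $1/\varrho'$ is itself $Y$-inert: from $\varrho' \gg H/Z$ and $\varrho^{(j)} \ll H/Z^j$ one obtains $(1/\varrho')^{(j)} \ll (Z/H)(Y/Z)^j$, i.e. a $Y$-inert function of size $Z/H$. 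Using that a product of $Y$-inert functions multiplies their sizes and that differentiation multiplies the size by $Y/Z$, one finds that $w_1$ is $Y$-inert of size $\ll Y/H$. The hypothesis $H/Y^2 \ge R$ together with $Y \ge 1$ gives $Y/H \le (RY)^{-1} \le R^{-1}$, so each integration by parts gains a clean factor $R^{-1}$. Iterating $A$ times produces $I = \int w_A e^{i\varrho}$ with $w_A$ a $Y$-inert amplitude of size $\ll R^{-A}$ supported on $[Z,2Z]$, whence $I \ll Z R^{-A}$ by the trivial bound.

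For part (2) I would first rescale $y = y_0 + Zu$ to unit scale: the amplitude $W(u) := w(y_0+Zu)$ satisfies $W^{(j)} \ll Y^j$ and is supported on $u \asymp 1$, while the phase $\phi(u) := \varrho(y_0+Zu)$ satisfies $\phi^{(j)} \ll H$, $\phi'(0)=0$ and $\phi'' \gg H$; uniqueness of $y_0$ follows because $\phi'$ is strictly monotone. Since $\varrho''(y_0) = \phi''(0)/Z^2$ and $I = Z\int W e^{i\phi}$, the claim reduces to showing $\int_{\mathbb{R}} W(u) e^{i\phi(u)}\,du = \phi''(0)^{-1/2} e^{i\phi(0)} F + O_A(R^{-A})$ with $F$ inert. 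I would then insert a smooth cutoff separating a window $|u| \le \eta$ from its complement, with $\eta$ chosen to lie between the Gaussian scale $H^{-1/2}$ and the amplitude scale $Y^{-1}$ — possible precisely because $Y^2/H \le R^{-1} \le 1$. On the tail $|u| \ge \eta$ one has $\phi'(u) = \int_0^u \phi'' \gg H|u|$, so a dyadic decomposition into annuli $|u| \asymp U \in [\eta,1]$ and an application of part (1) on each annulus shows the tail contributes $O_A(R^{-A})$.

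On the window I would Taylor-expand $\phi(u) = \phi(0) + \tfrac12\phi''(0)u^2 + \psi(u)$ with $\psi(u) \ll H|u|^3$, write $e^{i\phi} = e^{i\phi(0)}e^{\frac{i}{2}\phi''(0)u^2}e^{i\psi}$, and expand $e^{i\psi}$ into finitely many terms plus a negligible remainder. This reduces the integral to a finite sum of Gaussian moment integrals $\int W(u)\psi(u)^k e^{\frac{i}{2}\phi''(0)u^2}\,du$, each of which I would complete to the whole line (the added tails being negligible by the non-stationary estimate of part (1)) and evaluate via the Fresnel formula $\int u^m e^{\frac{i}{2}\phi''(0)u^2}\,du \asymp \phi''(0)^{-(m+1)/2}$. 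The leading term yields $\sqrt{2\pi}\,e^{i\pi/4}\,\phi''(0)^{-1/2}W(0)e^{i\phi(0)}$, and the successive correction terms are smaller by powers of $Y^2/\phi''(0) \ll R^{-1}$; collecting $W(0) = w(y_0)$ together with the whole asymptotic series into a single function $F(y_0)$ and factoring out $\phi''(0)^{-1/2}$ gives the stated main term with error $O_A(Z R^{-A})$ (using $\phi''(0)^{-1/2}\ll 1$).

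I expect the main obstacle to be the inert-function bookkeeping in part (2): verifying that the collected function $F(y_0)$ is genuinely $Y$-inert, i.e. that every derivative $F^{(j)}(y_0)$ obeys the inert bounds uniformly over the family $\mathcal{F}$. This requires differentiating the asymptotic expansion under the integral sign and checking that each factor $\phi''(0)^{-k}$ and each $\psi$-moment still contributes the expected gain $Y^2/H \le R^{-1}$ after differentiation in $y_0$. A secondary technical point is the calibration of the window width $\eta$ so that the Gaussian tail, the non-stationary tail, and the Taylor remainder $H\eta^3$ are all simultaneously $O_A(R^{-A})$; this is exactly where the hypothesis $H/Y^2 \ge R$ is essential, since it guarantees that the Gaussian scale $H^{-1/2}$ sits strictly inside the amplitude scale $Y^{-1}$.
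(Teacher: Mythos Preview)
The paper does not supply its own proof of this lemma; it is quoted directly from Blomer--Khan--Young \cite{BKY} (Lemma~8.1 and Proposition~8.2), restated in the inert-function framework of Kiral--Petrow--Young \cite{KPY}. Your outline is essentially the standard argument carried out in those references: iterated integration by parts for part~(1), and for part~(2) a rescaling to the stationary point, Taylor expansion of the phase, and evaluation of the resulting Gaussian moments, with the inert bookkeeping carried along. So your approach is correct and aligned with the sources the paper cites.

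One small caution on your execution of part~(2): if you insert a sharp cutoff at width $\eta$, the cutoff itself has derivative scale $\eta^{-1}$, which for $\eta$ near $H^{-1/2}$ is much larger than $Y$ and spoils the amplitude inertness on the tail annuli. The proof in \cite{BKY} avoids this by not introducing a cutoff at all: one Taylor-expands both the amplitude $W$ and the non-quadratic part $e^{i\psi}$ about $u=0$ to high order, evaluates the resulting Fresnel integrals $\int u^{m} e^{\frac{i}{2}\phi''(0)u^{2}}\,du$ exactly over all of $\mathbb{R}$, and bounds the Taylor remainder integrals directly using $\phi''\gg H$. This sidesteps the calibration of $\eta$ entirely and makes the $Y$-inertness of $F(y_0)$ fall out cleanly from differentiating the explicit asymptotic series term by term.
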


We also need the
second derivative test (see \cite[Lemma 5.1.3]{Hux}).
	
\begin{lemma}\label{lem: 2st derivative test, dim 1}
Let $\varrho(x)$ be real and twice
differentiable on the open interval $[a, b]$
with $ \varrho'' (x) \gg \lambda_0>0$  on $[a, b]$. Let $w(x)$
be real on $[ a, b]$ and let $V_0$ be its total
variation on $[ a, b]$ plus the maximum modulus of $w(x)$ on $[ a, b]$.
Then
		\begin{align*}
	I\ll \frac {V_0} {\sqrt{\lambda_0}}.
		\end{align*}
	\end{lemma}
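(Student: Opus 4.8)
The plan is to separate the argument into two stages: first establish the bound for the \emph{unweighted} oscillatory integral $J(u,v)=\int_u^v e^{i\varrho(y)}\,\mathrm{d}y$ uniformly over all subintervals $[u,v]\subseteq[a,b]$, and then recover the weighted integral $I$ by an integration-by-parts argument (Abel summation in the Riemann--Stieltjes sense) that converts the cost of the weight $w$ into its total variation and its maximum modulus, which are precisely the two quantities packaged into $V_0$.

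For the unweighted integral, the key structural fact is that $\varrho''(x)\gg\lambda_0>0$ forces $\varrho'$ to be strictly increasing on $[a,b]$, so $\varrho'$ has at most one zero $y_0$. I would fix a parameter $\delta>0$ and split $[u,v]$ into the portion with $|y-y_0|\leq\delta$ and its complement. On the short interval around $y_0$ the trivial bound contributes $\ll\delta$. On the complementary range, writing $\varrho'(y)=\int_{y_0}^y\varrho''(t)\,\mathrm{d}t$ gives $|\varrho'(y)|\gg\lambda_0|y-y_0|\geq\lambda_0\delta$, so one may integrate by parts in the form $\int e^{i\varrho}\,\mathrm{d}y=\int (i\varrho')^{-1}(e^{i\varrho})'\,\mathrm{d}y$; the boundary terms are $\ll(\lambda_0\delta)^{-1}$, while the remaining integral is $-\int e^{i\varrho}\,\frac{\mathrm{d}}{\mathrm{d}y}\!\big((i\varrho')^{-1}\big)\,\mathrm{d}y$, whose modulus is at most $\int|\mathrm{d}(1/\varrho')|$, and since $\varrho'$ is monotone this telescopes to $\ll(\lambda_0\delta)^{-1}$. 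Balancing $\delta\asymp\lambda_0^{-1/2}$ yields the uniform bound $J(u,v)\ll\lambda_0^{-1/2}$; crucially the implied constant does not depend on $u,v$, since the argument uses only the lower bound on $\varrho''$ over the whole interval.

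To pass to the weighted integral, set $g(y)=\int_y^b e^{i\varrho(t)}\,\mathrm{d}t=J(y,b)$, so that $g(b)=0$, $g'(y)=-e^{i\varrho(y)}$ and $\|g\|_\infty\ll\lambda_0^{-1/2}$ by the previous step. Integration by parts then gives $I=w(a)g(a)+\int_a^b g(y)\,\mathrm{d}w(y)$, and taking absolute values bounds $|I|$ by $\|g\|_\infty$ times the sum of $|w(a)|$ and the total variation $\int_a^b|\mathrm{d}w|$. Since $|w(a)|$ is controlled by the maximum modulus of $w$, the right-hand side is $\ll\lambda_0^{-1/2}V_0$, which is the claim.

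The main technical point to watch is the regularity of the weight: $w$ is only assumed to be of bounded variation (its total variation enters $V_0$), so the final integration by parts must be carried out as a Riemann--Stieltjes identity rather than with $w'$, and one must verify the telescoping of $\int|\mathrm{d}(1/\varrho')|$ from the monotonicity of $\varrho'$ alone, rather than from any pointwise control on $\varrho''$ beyond its sign and size. Everything else reduces to the routine optimization $\delta\asymp\lambda_0^{-1/2}$.
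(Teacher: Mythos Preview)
The paper does not prove this lemma at all; it simply quotes it as \cite[Lemma~5.1.3]{Hux}. Your argument is the standard van der Corput proof (trivial estimate near the stationary point, first-derivative test away from it, then Riemann--Stieltjes partial summation to insert the weight), which is exactly what one finds in Huxley's book, so there is nothing substantive to compare.

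Your outline is correct. One small point worth making explicit: when $\varrho'$ has no zero in $[u,v]$ your splitting ``$|y-y_0|\le\delta$ versus complement'' is not literally defined. The usual fix is to let $y_0$ be the point of $[u,v]$ at which $|\varrho'|$ is minimal (an endpoint, by monotonicity of $\varrho'$); the set where $|\varrho'|<c\lambda_0\delta$ is then still an interval of length $O(\delta)$ by the lower bound on $\varrho''$, and the rest of the argument is unchanged. Also note that the Riemann--Stieltjes integration by parts you invoke is legitimate because $g(y)=\int_y^b e^{i\varrho}$ is continuous (indeed $C^1$), which is all that is needed when $w$ is merely of bounded variation.
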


\subsection{The circle method}

Let $\delta: \mathbb{Z}\rightarrow \{0,1\}$ be defined as
$\delta(0)=1$ and $\delta(n)=0$ for $n\neq 0$.
We will use a version of the $\delta$-method by
Duke, Friedlander and Iwaniec (see \cite[Chapter 20]{IK})
which states that for any $n\in \mathbb{Z}$ and $C\in \mathbb{R}^+$, we have
\bea\label{DFI's}
\delta(n)=\frac{1}{C}\sum_{1\leq c\leq C} \;\frac{1}{c}\;
\sideset{}{^\star}\sum_{a\bmod{c}}e\left(\frac{na}{c}\right)
\int_\mathbb{R}g(c,\zeta) e\left(\frac{n\zeta}{cC}\right)\mathrm{d}\zeta,
\eea
where the $\star$ on the sum indicates
that the sum over $a$ is restricted to $(a,c)=1$.
The function $g$ has the following properties
(see \cite[(20.158), (20.159)]{IK} and  \cite[Lemma 15]{HB})
\bea\label{g}
g(c,\zeta)\ll |\zeta|^{-A}, \;\;\;\;\;\; g(c,\zeta) =1+
O\left(\frac{C}{c}\left(\frac{c}{C}+|\zeta|\right)^A\right)
\eea
for any $A>1$ and
\bna
\frac{\partial^j}{\partial \zeta^j}g(c,\zeta)\ll
|\zeta|^{-j}\min\left(|\zeta|^{-1},\frac{C}{c}\right)\log C, \quad j\geq 1.
\ena
\subsection{Some estimates}
We quote the following results from
Pitt (see \cite[Section 2]{Pitt}),
Tolev (see \cite[Section 2]{T})
and Liu and Ren (see \cite[Lemma 3.3]{LR}) respectively.
\begin{lemma}\label{Pitt}
Let $c_1$ be the largest square-free factor of $c\in \mathbb{N}$ such that
$c=c_1c_2$, $(c_1,c_2)=1$. Then for any $\varepsilon>0$ and $\beta\in \mathbb{R}$,
we have
\bna
\sum_{n\sim X}S(m,n,c)e(\alpha n^2+\beta n)\ll (Xc)^{1/2+\varepsilon}+\mathbf{T}^{1/2},
\ena
where
\bea\label{sum T}
\mathbf{T}=(m,c)^{1/2}\tau^2(c_1)c_1^{1/2}c_2\sum_{c=c_3c_4}c_4^{1/2}
\sum_{u\bmod c_3\atop (u,c_3)=1}\sum_{1\leq h<X}\min\left\{X,\left(2\left\|2\alpha h+\frac{u}{c_3}\right\|\right)^{-1}\right\}.
\eea
Here $\|x\|$ denote the distance from $x$ to the nearest integer.
\end{lemma}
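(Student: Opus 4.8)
The plan is to reduce the quadratic phase to a linear one by Weyl--van der Corput differencing, and then to exploit cancellation in the product of Kloosterman sums by completing the sum over $n$ and estimating the resulting complete exponential sums with the Weil bound. Recall that $S(m,n,c)=\sum_{\substack{a\bmod c\\(a,c)=1}}e((ma+n\bar a)/c)$, where $\bar a$ denotes the inverse of $a$ modulo $c$. Writing $\Sigma$ for the sum to be estimated and passing to $|\Sigma|^2$, I would set $n_1=n+h$, $n_2=n$ and use $\alpha n_1^2+\beta n_1-\alpha n_2^2-\beta n_2=2\alpha hn+(\alpha h^2+\beta h)$ to pull $e(\alpha h^2+\beta h)$ outside, obtaining
\[
|\Sigma|^2=\sum_{|h|<X}e(\alpha h^2+\beta h)\sum_{n}S(m,n+h,c)\,\overline{S(m,n,c)}\,e(2\alpha hn),
\]
where the inner sum runs over an interval of length at most $X$.

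Next I would open both Kloosterman sums, writing the product as $\sum_{\substack{a,b\bmod c\\(ab,c)=1}}e((m(a-b)+h\bar a+n(\bar a-\bar b))/c)$, and evaluate the $n$-sum as a geometric series; its modulus is $\ll\min\{X,(2\|(\bar a-\bar b)/c+2\alpha h\|)^{-1}\}$, which already exhibits the shape of the summand in $\mathbf{T}$. Since this weight depends on $a,b$ only through $v\equiv\bar a-\bar b\pmod c$, I would group the pairs according to $v$ and parametrize them by $x=\bar a$ (so that $\bar b=x-v$ and $a-b=-v\,\overline{x(x-v)}$). The remaining sum over $x$ then becomes a complete rational exponential sum modulo $c$,
\[
\mathcal{K}(v,h)=\sum_{\substack{x\bmod c\\(x,c)=1,\ (x-v,c)=1}}e\!\left(\frac{-mv\,\overline{x(x-v)}+hx}{c}\right),
\]
of Kloosterman--Sali\'e type.

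The main work, and the step I expect to be the chief obstacle, is the uniform estimation of $\mathcal{K}(v,h)$ with the correct dependence on $m$ and on the factorization $c=c_1c_2$. Using twisted multiplicativity I would split $\mathcal{K}(v,h)$ by the Chinese Remainder Theorem into a factor modulo the squarefree part $c_1$ and a factor modulo the powerful part $c_2$, apply the Weil bound to the former (which yields square-root cancellation together with the divisor weight $\tau^2(c_1)c_1^{1/2}$ and the gcd contribution $(m,c)^{1/2}$) and only the trivial bound to the latter (which leaves the unsaved factor $c_2$). Writing the fraction $v/c$ in lowest terms as $u/c_3$ with $c=c_3c_4$, $c_4=(v,c)$ and $(u,c_3)=1$ accounts for the weight $c_4^{1/2}$ and turns the distance $\|(\bar a-\bar b)/c+2\alpha h\|$ into $\|2\alpha h+u/c_3\|$, which is exactly the quantity appearing in $\mathbf{T}$.

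Finally I would isolate the diagonal term $h=0$: there the inner sum equals $\sum_n|S(m,n,c)|^2\ll(Xc)^{1+\varepsilon}$ by the Weil bound together with $\sum_{n\sim X}(m,n,c)\ll X(mc)^{\varepsilon}$, and after taking square roots this produces the first term $(Xc)^{1/2+\varepsilon}$. The off-diagonal terms $h\neq0$, reduced by symmetry to the range $1\le h<X$ and assembled with the weights obtained in the previous step and summed over $h$ and $u$, reproduce precisely $\mathbf{T}$, so that $|\Sigma|^2\ll(Xc)^{1+\varepsilon}+\mathbf{T}$ and the lemma follows on taking square roots.
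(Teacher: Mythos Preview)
The paper does not supply a proof of this lemma at all; it is quoted verbatim from Pitt \cite[Section~2]{Pitt} (see the sentence preceding Lemma~\ref{Pitt}: ``We quote the following results from Pitt \ldots''). Your proposal is in fact a faithful outline of Pitt's original argument: Weyl--van~der~Corput squaring, opening the two Kloosterman sums, summing the geometric progression in $n$, and then bounding the residual complete sum $\mathcal{K}(v,h)$ via twisted multiplicativity and the Weil bound on the squarefree part. So there is nothing to compare against in the present paper, and your approach coincides with the source it cites.

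One point that deserves more care than your sketch gives it is the precise accounting of the weights $(m,c)^{1/2}$ and $c_4^{1/2}$. The factor $c_4^{1/2}$ does not arise merely from rewriting $v/c$ in lowest terms; it comes from the fact that when $d=(v,c_1)$ is nontrivial, the rational function $-mv\,\overline{x(x-v)}$ degenerates modulo the primes dividing $d$, and the Weil bound there yields $p$ rather than $p^{1/2}$. Tracking this through the CRT decomposition is what produces both the extra $c_4^{1/2}$ and the gcd factor $(m,c)^{1/2}$ (the latter appearing when $p\mid m$ kills the nonlinear term as well). This bookkeeping is routine but should be spelled out if you intend a self-contained proof rather than a sketch.
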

\begin{lemma}\label{new series}
Let $M\geq 2$. Then for any $\varepsilon>0$, there exists a smooth function
$G(M, x)$,  which is periodic with period one and satisfies
\begin{equation*}
\begin{split}
\min(M,\|x\|^{-1})\leq G(M,x).
\end{split}
\end{equation*}
Moreover, $G(M,x)$ has a Fourier expansion
\begin{equation*}
\begin{split}
G(M,x)=\sum_{n}b(n)e(nx)
\end{split}
\end{equation*}
with coefficients satisfying $b(n)\ll\log M$ and
\begin{equation*}
\begin{split}
\sum_{|n|>M^{1+\varepsilon}}|b(n)|\ll_{A,\varepsilon}M^{-A}
\end{split}
\end{equation*}
for any constant $A>0$.
\end{lemma}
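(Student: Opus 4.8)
The plan is to realize $G(M,\cdot)$ as a mollification of a slightly inflated copy of $\min(M,\|x\|^{-1})$, exploiting the fact that $t\mapsto\|t\|^{-1}$ is convex away from the integers so that smoothing cannot decrease it there, while an inflated cap protects the peak. Fix once and for all an even, nonnegative $\eta\in C_c^\infty(\mathbb{R})$ with support in $[-1,1]$ and $\int_{\mathbb{R}}\eta=1$, and for the width $\delta=M^{-1-\varepsilon/2}$ put $\eta_\delta(t)=\delta^{-1}\eta(t/\delta)$, a smooth probability density supported on $[-\delta,\delta]$. Let $\Phi$ be the $1$-periodic function $\Phi(x)=\min(2M,\|x\|^{-1})$, obtained by doubling the cap, and set
\[
G(M,x)=(\Phi\ast\eta_\delta)(x)=\int_{\mathbb{R}}\Phi(x-t)\,\eta_\delta(t)\,\mathrm{d}t .
\]
Since $\Phi$ is $1$-periodic and $\eta_\delta$ has total mass one, $G(M,\cdot)$ is automatically smooth and $1$-periodic; it then remains to verify the majorization together with the two coefficient estimates. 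One may also assume $M$ is large, say $M\geq 2^{2/\varepsilon}$ so that $\delta\leq 1/(2M)$, since for $M$ in the bounded range $2\leq M<2^{2/\varepsilon}$ all the asserted quantities are $O_\varepsilon(1)$ and the conclusion is trivial.

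For the Fourier coefficients, interchanging the $t$- and $x$-integrations yields the clean factorization
\[
b(n)=\widehat{\Phi}(n)\,\widehat{\eta}(n\delta),\qquad \widehat{\Phi}(n)=\int_0^1\Phi(x)e(-nx)\,\mathrm{d}x ,
\]
where $\widehat{\eta}$ denotes the Fourier transform on $\mathbb{R}$. Since $\Phi\geq0$, I have $|\widehat{\Phi}(n)|\leq\int_0^1\Phi(x)\,\mathrm{d}x\ll\log M$, while $|\widehat{\eta}(n\delta)|\leq\int_{\mathbb{R}}\eta=1$, and these two bounds already give $b(n)\ll\log M$ for every $n$. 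For the tail, the smoothness of $\eta$ makes $\widehat{\eta}$ a Schwartz function, so $|\widehat{\eta}(\xi)|\ll_k(1+|\xi|)^{-k}$ for every $k$. For $|n|>M^{1+\varepsilon}$ one has $|n|\delta>M^{\varepsilon/2}\geq1$, and a direct summation gives, for $k\geq2$,
\[
\sum_{|n|>M^{1+\varepsilon}}|b(n)|\ll(\log M)\,\delta^{-k}\sum_{|n|>M^{1+\varepsilon}}|n|^{-k}\ll(\log M)\,M^{1+\varepsilon-k\varepsilon/2},
\]
which is $\ll_{A,\varepsilon}M^{-A}$ once $k=k(A,\varepsilon)$ is taken large enough.

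The main point, and the step I expect to require the most care, is the pointwise majorization $G(M,x)\geq\min(M,\|x\|^{-1})$; by periodicity and evenness it suffices to treat $0\leq x\leq1/2$. On the interval $(0,1)$ one has $\|y\|^{-1}=\max(1/y,1/(1-y))$, which is convex, and $\Phi$ coincides with this convex function precisely where $\|y\|\geq1/(2M)$. Hence whenever $[x-\delta,x+\delta]$ avoids both the integers and the region $\{\|y\|<1/(2M)\}$, Jensen's inequality applied to the symmetric mean-zero kernel $\eta_\delta$ gives $(\Phi\ast\eta_\delta)(x)\geq\Phi(x)=\|x\|^{-1}$. Since $\delta\ll1/M$, this covers all $x$ with $\|x\|\geq1/M$, where the target equals $\|x\|^{-1}$. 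For $\|x\|\leq1/M$ one must instead show $(\Phi\ast\eta_\delta)(x)\geq M$, and here the inflated cap does the work: if $\|x\|\leq1/(2M)-\delta$ the whole interval lies in $\{\Phi=2M\}$ and the average equals $2M$; in the short transition range the kernel sees only values of $\Phi$ equal to $2M$ or to $\|y\|^{-1}\geq M(1-O(M\delta))$, so the average still exceeds $M$ because $M\delta=M^{-\varepsilon/2}\to0$; and the endpoint case $\|x\|$ near $1/M$ is again handled by Jensen applied to the convex $\|\cdot\|^{-1}$. Carrying out this case analysis carefully, and in particular checking the transition region rather than merely the two extremes, is the one genuinely delicate part of the argument; everything else follows from the factorization of $b(n)$ above.
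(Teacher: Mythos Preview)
The paper does not prove this lemma; it is quoted from Tolev (see the sentence introducing Lemmas~2.5--2.7). Your construction via mollification of $\Phi(x)=\min(2M,\|x\|^{-1})$ is a perfectly good direct proof, and the Fourier-coefficient analysis (the factorization $b(n)=\widehat{\Phi}(n)\,\widehat{\eta}(n\delta)$, the bound $|\widehat{\Phi}(n)|\leq\int_0^1\Phi\ll\log M$, and the Schwartz tail estimate) is clean and correct.

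The one place that needs tightening is exactly where you flag it. In the transition region your stated lower bound $\|y\|^{-1}\geq M(1-O(M\delta))$ is true but too weak: a weighted average of values that are merely $\geq M(1-O(M\delta))$ need not exceed $M$, and the mass on the cap $\{\Phi=2M\}$ can be arbitrarily small there. The fix is to sharpen the bound. If $\|x\|\in(1/(2M)-\delta,\,1/(2M)+\delta)$ then every $y\in[x-\delta,x+\delta]$ has $\|y\|\leq 1/(2M)+2\delta$, so either $\Phi(y)=2M$ or $\Phi(y)=\|y\|^{-1}\geq 2M/(1+4M\delta)\geq M$ once $\delta\leq 1/(4M)$; hence the average is $\geq M$ outright. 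For $\|x\|\in[1/(2M)+\delta,\,1/M]$ the interval $[x-\delta,x+\delta]$ lies entirely in $\{\|y\|\geq 1/(2M)\}$, where $\Phi=\|\cdot\|^{-1}$ is convex, and Jensen gives $(\Phi\ast\eta_\delta)(x)\geq\|x\|^{-1}\geq M$. With these two adjustments the majorization is complete and your argument goes through.
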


\begin{lemma}\label{LR lemma}
Let $f(n)$ denote the largest square-free factor of $n$ such that $(f(n),n/f(n))=1$. Then we have
\begin{equation*}
\begin{split}
\sum_{n\leq X}f^{-1/4}(n)\ll X^{3/4}
\end{split}
\end{equation*}
and
\begin{equation*}
\begin{split}
\sum_{n\leq X}f^{-1/2}(n)\ll X^{1/2}\log X.
\end{split}
\end{equation*}
\end{lemma}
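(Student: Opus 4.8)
The plan is to exploit the canonical factorization hidden in the definition of $f(n)$. By construction $f(n)$ is exactly the product of the primes dividing $n$ to the first power, so every $n$ factors uniquely as $n=sm$ with $s=f(n)$ squarefree, $m=n/f(n)$ \emph{powerful} (every prime dividing $m$ does so with exponent $\geq 2$), and $(s,m)=1$. Hence $f(n)^{-\theta}=s^{-\theta}$, and both target sums decouple once I group each $n$ by its powerful part $m$ and its squarefree part $s$:
\begin{equation*}
\sum_{n \leq X} f(n)^{-\theta}
= \sum_{\substack{m \leq X \\ m \text{ powerful}}}\;
\sum_{\substack{s \leq X/m \\ s \text{ squarefree},\,(s,m)=1}} s^{-\theta},
\qquad \theta \in \{1/4,\,1/2\}.
\end{equation*}

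For the inner sum over $s$ I would drop the squarefree and coprimality restrictions (legitimate for an upper bound since all terms are positive) and compare with an integral via partial summation, obtaining $\sum_{s \leq Y} s^{-\theta} \ll Y^{1-\theta}$ for $0<\theta<1$; with $Y=X/m$ this gives $(X/m)^{1-\theta}$. Pulling out the factor $X^{1-\theta}$ reduces everything to estimating $\sum_{m \text{ powerful}} m^{-(1-\theta)}$. The key arithmetic input is the standard parametrization of a powerful number as $m=a^{2}b^{3}$ with $b$ squarefree, which yields the density estimate $\#\{m \leq T : m \text{ powerful}\}\ll T^{1/2}$ and, equivalently, the convergence of $\sum_{m \text{ powerful}} m^{-\sigma}$ for every $\sigma>1/2$.

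With this in hand the two cases separate cleanly by where the exponent $1-\theta$ sits relative to the critical value $1/2$. For $\theta=1/4$ we have $1-\theta=3/4>1/2$, so $\sum_{m \text{ powerful}} m^{-3/4}$ converges to an absolute constant and I immediately get $\sum_{n \leq X} f(n)^{-1/4}\ll X^{3/4}$. For $\theta=1/2$ the exponent $1-\theta=1/2$ lands exactly on the boundary: the series $\sum_{m \text{ powerful}} m^{-1/2}$ diverges, but only logarithmically, so using the count $\ll T^{1/2}$ together with partial summation I would bound the truncated sum $\sum_{m \leq X,\,\text{powerful}} m^{-1/2}$ by $\ll \log X$, delivering $\sum_{n \leq X} f(n)^{-1/2}\ll X^{1/2}\log X$.

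The only real subtlety is this boundary behaviour in the second estimate: the exponent $1/2$ attached to the powerful part precisely matches the $T^{1/2}$ density of powerful numbers, so one cannot absorb the powerful-sum into a convergent constant as in the first case and must instead track the resulting logarithm carefully. Everything else is routine partial summation once the $n=sm$ decomposition and the powerful-number density are in place.
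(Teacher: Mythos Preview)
Your argument is correct. The decomposition $n=sm$ with $s=f(n)$ squarefree, $m=n/f(n)$ powerful, and $(s,m)=1$ is exactly right (since $f(n)$ picks out the primes appearing to the first power), and the subsequent estimates via $\sum_{s\le Y}s^{-\theta}\ll Y^{1-\theta}$ together with the powerful-number count $\#\{m\le T:m\text{ powerful}\}\ll T^{1/2}$ go through cleanly in both cases; the logarithm in the second estimate is handled properly by partial summation at the boundary exponent $1/2$.

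As for comparison with the paper: there is nothing to compare. The paper does not prove this lemma at all; it simply quotes the result from Liu and Ren \cite[Lemma~3.3]{LR}. Your write-up therefore supplies a self-contained proof where the paper gives none, and it is in fact essentially the same argument Liu and Ren use.
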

We also need the following estimate (see Karatsuba \cite[Chapter \uppercase\expandafter{\romannumeral6}, \S2, Lemma 5]{K}).
\begin{lemma}\label{x series}
Let
\bna
\alpha=\frac{\ell}{q}+\frac{\theta}{q^2},\quad (\ell,q)=1, \quad q\geq1, \quad|\theta|\leq1.
\ena
Then for any $\beta\in \mathbb{R}$, $U>0$ and $P\geq1$, we have
\bna
\sum_{x=1}^P
\min\left\{U,\left\|\alpha x+\beta\right\|^{-1}\right\}
\leq6\left(\frac{P}{q}+1\right)(U+q\log q).
\ena
\end{lemma}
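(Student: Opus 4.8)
The final statement to prove is Lemma~\ref{x series}, the Karatsuba-type estimate
\[
\sum_{x=1}^P \min\left\{U,\left\|\alpha x+\beta\right\|^{-1}\right\}
\leq 6\left(\frac{P}{q}+1\right)(U+q\log q),
\]
under the hypothesis $\alpha=\ell/q+\theta/q^2$ with $(\ell,q)=1$, $q\ge 1$, $|\theta|\le 1$. The plan is to run the classical \emph{divide-into-blocks-of-length-$q$} argument. First I would partition the range $1\le x\le P$ into at most $\lfloor P/q\rfloor+1$ consecutive blocks, each of length at most $q$. On a single such block, say $x$ running over $q$ consecutive integers $x_0+1,\ldots,x_0+q$, the key observation is that as $x$ ranges over a complete residue system modulo $q$, the fractional parts $\{\alpha x + \beta\}$ are \emph{well-distributed}: because $\alpha = \ell/q + \theta/q^2$, the step $\alpha x$ changes by $\ell/q$ plus a small drift of size at most $\theta/q^2$ per step, so over a block of length $q$ the cumulative drift is bounded by $|\theta|/q \le 1/q$.

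The main step is therefore the single-block estimate. I would fix a block and study the points $y_x := \alpha x + \beta \bmod 1$. Writing $\alpha x = \ell x/q + \theta x/q^2$, the numbers $\ell x/q$ for $x$ in a complete residue system run through all of $\{0,1/q,\ldots,(q-1)/q\}$ (a permutation, since $(\ell,q)=1$), while the perturbation $\theta x/q^2$ stays within an interval of length $\le 1/q$ across the block. Consequently the $q$ points $y_x$ fall into distinct ``slots'' of width $1/q$, with at most a bounded number (I expect at most two or three) landing in any single slot of width $1/q$ after accounting for the drift and the shift by $\beta$. For the points with $\|y_x\|\ge 1/q$ I would bound $\min\{U,\|y_x\|^{-1}\}\le \|y_x\|^{-1}$ and sum the harmonic-type series: these contribute $\ll q\sum_{1\le k\le q/2} (k/q)^{-1}\cdot(\text{bounded multiplicity}) \ll q\log q$. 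For the few points with $\|y_x\|<1/q$ (there are only $O(1)$ of them per block) I would use the trivial bound $\min\{U,\|y_x\|^{-1}\}\le U$, contributing $\ll U$ per block. Thus each block contributes $\ll U + q\log q$.

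Combining, the total is bounded by $(\text{number of blocks})\times(U+q\log q) \ll (P/q+1)(U+q\log q)$, and a careful tracking of the absolute constants yields the factor $6$ in the stated inequality. The principal obstacle is the clean control of the \emph{multiplicity} in each width-$1/q$ slot: one must verify that the perturbation $\theta x/q^2$ together with the fixed shift $\beta$ cannot force more than a bounded number of the $y_x$ into the same slot, and then that summing $\|y_x\|^{-1}$ over the resulting near-uniformly-spaced points genuinely produces the harmonic sum $\asymp q\log q$ rather than something larger. I would handle this by ordering the fractional parts $\|y_x\|$ increasingly as $\rho_1\le \rho_2\le\cdots$, showing $\rho_j \gg (j-c)/q$ for the ordered values with $c=O(1)$ a bound on the slot-multiplicity, and then estimating $\sum_j \min\{U,\rho_j^{-1}\}$ by splitting at the threshold where $\rho_j^{-1}=U$; the small-$j$ terms give $O(U)$ and the remaining terms give $O(q\log q)$, after which only bookkeeping of constants remains.
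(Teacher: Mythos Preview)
Your proof sketch is correct and follows the standard Vinogradov block argument. Note, however, that the paper does not actually prove this lemma: it merely quotes the statement and cites Karatsuba \cite[Chapter~VI, \S2, Lemma~5]{K}. So there is nothing to compare against beyond a reference, and your argument is essentially the textbook proof that citation points to.
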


\section{Proof of Theorem \ref{the quadratic exponential sums}}\label{section 1}
We assume $\mu>X^{\varepsilon}$, otherwise Theorem \ref{the quadratic exponential sums} follows from Liu and Ren \cite{LR}.
By dyadic subdivision it suffices to prove the required estimate for the sum
\bna
S(X,\alpha,\beta)=\sum_{n\sim X}\lambda_f(n)
e\left(\alpha n^2+\beta n\right).
\ena

Let $V(x)\in \mathcal {C}_c^{\infty}(3/4,9/4)$ be identically one
on $[1,2]$ with derivatives satisfying $V^{(j)}(x)\ll_j 1$ for any integer $j\geq 0$.
Then we can write $S(X,\alpha,\beta)$ as
\bea\label{aim-sum}
S(X,\alpha,\beta)=\sum_{n\sim X}e\left(\alpha n^2+\beta n\right)
\sum_{m=1}^{\infty}\lambda_f(m)
V\left(\frac{m}{X}\right)\delta(n-m),
\eea
where $\delta(n)=\left\{
\begin{aligned}
&1\, \textit{ if } n=0,\\
&0\, \textit{ if } n\neq 0
\end{aligned} \right. $ is the Kronecker delta function.

Plugging the identity \eqref{DFI's} for
$\delta(n)$ into \eqref{aim-sum} and
exchanging the order of integration and summations,
we get
\bna
S(X,\alpha,\beta)&=&\frac{1}{C}
\int_{\mathbb{R}} \sum_{1\leq c\leq C}\frac{g(c,\zeta)}{c}
\sum_{n\sim X}e\left(\alpha n^2+\beta n+\frac{n\zeta}{cC}\right)\;
\\&&\times \sideset{}{^\star}\sum_{a\bmod{c}}
e\left(\frac{na}{c}\right)
\sum_{m=1}^{\infty}\lambda_{f}(m)e\left(-\frac{ma}{c}\right)
V\left(\frac{m}{X}\right)
e\left(-\frac{m\zeta}{cC}\right)\mathrm{d}\zeta,
\ena
where $C>1$ is a parameter to be chosen later.
Note that the contribution from $|\zeta|\leq X^{-G}$ for $G>0$ sufficiently large
is negligible.
Moreover, by the first property in \eqref{g}, we can restrict $\zeta$ in the range
$|\zeta|\leq X^{\varepsilon}$ up to an negligible error. So we can
insert a smooth partition of unity for the $\zeta$-integral and write $S(X,\alpha,\beta)$ as
\bna
S(X,\alpha,\beta)&=&\sum_{X^{-G}\ll \Xi\ll X^{\varepsilon}\atop \text{dyadic}}\frac{1}{C}
\int_{\mathbb{R}} \varpi\left(\frac{\zeta}{\Xi}\right) \sum_{1\leq c\leq C}\frac{g(c,\zeta)}{c}
\sum_{n\sim X}e\left(\alpha n^2+\beta n+\frac{n\zeta}{cC}\right)\;
\\&&\times \sideset{}{^\star}\sum_{a\bmod{c}}
e\left(\frac{na}{c}\right)
\sum_{m=1}^{\infty}\lambda_{f}(m)e\left(-\frac{ma}{c}\right)
V\left(\frac{m}{X}\right)
e\left(-\frac{m\zeta}{cC}\right)\mathrm{d}\zeta+O_A(X^{-A}),
\ena
where $\varpi(x)\in \mathcal{C}_c^{\infty}(1,2)$
 satisfying $\varpi^{(j)}(x)\ll_j 1$ for any integer $j\geq 0$.
 Without loss of generality,
we only consider the contribution from $\zeta>0$ (the proof for $\zeta<0$ is entirely
similar). By abuse of notation, we still write the contribution from $\zeta>0$
as $S(X,\alpha,\beta)$.

Next we break the $c$-sum $\sum_{1\leq c\leq C}$ into dyadic segments
$c\sim C_0$ with $1\ll C_0\ll C$ and write
\bea\label{C range}
S(X,\alpha,\beta)=\sum_{X^{-G}\ll \Xi\ll X^{\varepsilon}\atop \text{dyadic}}
\sum_{1\ll C_0\ll C\atop \text{dyadic}}S(X,\alpha,\beta,C_0,\Xi)+O_A(X^{-A})
\eea
with
\bea\label{beforeVoronoi}
S(X,\alpha,\beta,C_0,\Xi)&=&\frac{1}{C}
\int_{\mathbb{R}} \varpi\left(\frac{\zeta}{\Xi}\right)\sum_{c\sim C_0}\frac{g(c,\zeta)}{c}
\sum_{n\sim X}e\left(\alpha n^2+\beta n+\frac{n\zeta}{cC}\right)\;
\nonumber\\&&\times \sideset{}{^\star}\sum_{a\bmod{c}}
e\left(\frac{na}{c}\right)
\sum_{m=1}^{\infty}\lambda_{f}(m)e\left(-\frac{ma}{c}\right)
V\left(\frac{m}{X}\right)
e\left(-\frac{m\zeta}{cC}\right)\mathrm{d}\zeta.
\eea

We now proceed to estimate $S(X,\alpha,\beta,C_0,\Xi)$ for $1\ll C_0\ll C$.
Applying Lemma \ref{GL2 Voronoi formula} with $\varphi(x)=V\left(x/X\right)
e\left(-x\zeta/(cC)\right)$ to transform the sum over $m$ we get
\bea\label{Voronoi}
\sum_{m=1}^{\infty}\lambda_{f}(m)e\left(-\frac{ma}{c}\right)
V\left(\frac{m}{X}\right)
e\left(-\frac{m\zeta}{cC}\right)
=c\sum_\pm\sum_{m=1}^\infty\frac{\lambda_f(m)}{m}e\left(\mp\frac{\overline{a}m}{c}\right)
\Psi^\pm\left(\frac{m}{c^2},c,\zeta\right),
\eea
where by \eqref{GL2 integral-1},
\bea\label{sigma}
\Psi^\pm\left(x,c,\zeta\right)=\frac{1}{4\pi^2}\int_{\mathbb{R}}(\pi^2 xX)^{-\sigma-i\tau}
\rho_f^{\pm}(\sigma+i\tau)V^{\dagger}\left(\frac{\zeta X}{cC},-\sigma-i\tau\right) \mathrm{d}\tau
\eea
with $\rho_f^{\pm}(s)$ defined in \eqref{gamma} and
\bna
V^\dagger(r,s)=\int_0^\infty V(x)e(-rx)x^{s-1}\mathrm{d}x.
\ena
Plugging \eqref{Voronoi} into \eqref{beforeVoronoi}, we obtain
\bea\label{afterVoronoi}
S(X,\alpha,\beta,C_0,\Xi)&=&\frac{1}{C}\sum_\pm
\int_{\mathbb{R}} \varpi\left(\frac{\zeta}{\Xi}\right)\sum_{c\sim C_0}g(c,\zeta)
\sum_{m=1}^\infty\frac{\lambda_f(m)}{m}
\Psi^\pm\left(\frac{m}{c^2},c,\zeta\right)\nonumber\\&&\times
\sum_{n\sim X}e\left(\alpha n^2+\beta n+\frac{n\zeta}{cC}\right)S(n,\mp m;c)
\mathrm{d}\zeta,
\eea
where $S(n,m;c)$ is the classical Kloosterman sum.

The integral $\Psi^\pm\left(x,c,\zeta\right)$ has the following properties.
\begin{lemma}\label{integral-lemma-1}
Let $r=\zeta X/cC$ and $\zeta\asymp \Xi$.

(1) Suppose $X\Xi/(cC)\gg X^{\varepsilon}$. Then for
$\mu^{1-\varepsilon}\ll r\ll \mu^{1+\varepsilon}$,
$\Psi^\pm\left(x,c,\zeta\right)=\mathbf{\Psi}_1+\mathbf{\Psi}_2$, where
$\mathbf{\Psi}_1$
is negligibly small unless $xX\ll \mu^{1+\varepsilon}$, in which case
$$
\mathbf{\Psi}_1\ll (rXx)^{1/2},
$$
and $\mathbf{\Psi}_2$ is negligibly small unless $xX\ll r\mu^{1+\varepsilon}$, in which case
$$
\mathbf{\Psi}_2\ll (xX)^{1/2}.
$$

(2) Suppose $X\Xi/(cC)\gg X^{\varepsilon}$. Then
for $r\ll \mu^{1-\varepsilon}$ or $r\gg \mu^{1+\varepsilon}$, $\Psi^\pm\left(x,c,\zeta\right)$ is negligibly small
unless $x\asymp \max\{r^2,\mu^2\}/X$, in which case
\bna
\Psi^{\pm}(x,c,\zeta)\ll (xX)^{1/2}.
\ena

(3) If $X\Xi/(cC)\ll X^{\varepsilon}$, then $\Psi^\pm\left(x,c,\zeta\right)$
is negligibly small unless $xX\ll \mu^{2+\varepsilon}$, in which case
\bna
\Psi^\pm\left(x,c,\zeta\right)\ll (xX)^{1/2+\varepsilon}.
\ena
\end{lemma}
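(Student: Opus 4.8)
The plan is to insert the Stirling expansion into $\rho_f^\pm$ and to resolve the inner transform $V^\dagger(r,-\sigma-i\tau)=\int_0^\infty V(u)e(-ru)u^{-\sigma-i\tau-1}\,\mathrm{d}u$, so that $\Psi^\pm(x,c,\zeta)$ collapses to a single oscillatory integral in $\tau$ whose phase and amplitude can be read off. The phase of the $u$-integral is $-2\pi ru-\tau\log u$, with stationary point $u_0=-\tau/(2\pi r)$, so when $r\gg X^\varepsilon$ (the situation in parts (1) and (2)) Lemma \ref{lemma:exponentialintegral} shows $V^\dagger$ is negligible unless $\tau<0$ and $|\tau|\asymp r$, in which case it equals $e^{i\phi(u_0)}/\sqrt{\phi''(u_0)}$ times a $Y$-inert function and has size $\asymp r^{-1/2}$. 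Combining $\phi(u_0(\tau))$, the Stirling phase of $\rho_f^\pm$ from \eqref{Stirling approximation}, and the explicit factor $(\pi^2xX)^{-i\tau}$, the total phase $\Phi(\tau)$ of the surviving $\tau$-integral satisfies
\[
\Phi'(\tau)=\log\frac{r\,|\tau^2-\mu^2|}{xX\,|\tau|}+O(1),
\]
so that on the support $|\tau|\asymp r$ a stationary point occurs exactly when $|\tau_0^2-\mu^2|\asymp xX$, while \eqref{Gamma-2} gives the amplitude bound $|\rho_f^\pm(\sigma+i\tau)|\ll|\tau^2-\mu^2|^{\sigma+1/2}$.

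For part (3), where $r=X\Xi/(cC)\ll X^\varepsilon$, the exponential $e(-ru)$ is essentially flat, so integrating by parts in $u$ shows $V^\dagger$ is negligible unless $|\tau|\ll X^\varepsilon$. On this short range $|\tau|\ll X^\varepsilon\ll\mu$ I would bound $\rho_f^\pm$ trivially by $\mu^{2\sigma+1}$ via \eqref{Gamma-2}, obtaining $\Psi^\pm\ll X^\varepsilon\mu\,(\mu^2/(xX))^{\sigma}$. Pushing $\sigma\to+\infty$ (legitimate since $\rho_f^\pm$ is holomorphic for $\sigma>-1$) makes this negligible once $xX\gg\mu^{2+\varepsilon}$, while the choice $\sigma=-1/2$ yields $\Psi^\pm\ll(xX)^{1/2+\varepsilon}$ in the complementary range; this is the easy case.

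For part (2), where $|\tau|\asymp r$ stays a factor $\mu^\varepsilon$ away from $\mu$, the factors $|\tau\pm\mu|$ are both $\asymp\max(r,\mu)$, Stirling is valid throughout the support, and the integral has a single non-degenerate saddle. I would apply the second derivative test (Lemma \ref{lem: 2st derivative test, dim 1}) with $\Phi''(\tau)=(\tau^2+\mu^2)/(\tau(\tau^2-\mu^2))\asymp 1/r$ and amplitude $\asymp(xX)^{1/2}r^{-1/2}$ at the saddle, to get $\Psi^\pm\ll(xX)^{1/2}$; the saddle lies in the support only when $|\tau_0^2-\mu^2|\asymp\max(r^2,\mu^2)\asymp xX$, i.e. $x\asymp\max\{r^2,\mu^2\}/X$, and otherwise $|\Phi'|\gg\varepsilon\log X$ forces negligibility by repeated integration by parts.

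Part (1) is where the real work lies, and the decomposition $\Psi^\pm=\mathbf{\Psi}_1+\mathbf{\Psi}_2$ arises from splitting the $\tau$-support at the turning point $\tau=-\mu$, where $\tau+\mu$ vanishes while $\tau-\mu\asymp\mu$. On the Stirling-valid part $|\tau+\mu|\gg 1$ I would run the same second-derivative-test argument as in part (2), dyadically in $|\tau+\mu|$; the saddle sits at $|\tau+\mu|\asymp xX/\mu$ and lies in this part only for $xX\ll r\mu^{1+\varepsilon}$, giving $\mathbf{\Psi}_2\ll(xX)^{1/2}$. The piece $\mathbf{\Psi}_1$ comes from the narrow window $|\tau+\mu|\ll X^\varepsilon$, where \eqref{Stirling approximation} fails and neither the phase nor the amplitude of $\rho_f^\pm$ admits the clean Stirling description; there I would use $\Phi'\approx\log(\mu/(xX))$ to conclude negligibility unless $xX\ll\mu^{1+\varepsilon}$, and in that range bound the window contribution crudely by the full-support estimate $(rXx)^{1/2}$. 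The main obstacle is precisely this coalescence of the saddle with the turning point near $\tau=-\mu$: since Stirling breaks down there one cannot invoke stationary phase directly, and care is needed to confirm that the transition window contributes nothing beyond the support constraint $xX\ll\mu^{1+\varepsilon}$ and the stated bound.
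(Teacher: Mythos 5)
Your setup (stationary phase on $V^\dagger$ giving $|\tau|\asymp r$, $\tau<0$, amplitude $\asymp r^{-1/2}$), your part (3) (truncation to $|\tau|\ll X^\varepsilon$, trivial bound \eqref{Gamma-2}, large $\sigma$ for negligibility, $\sigma=-1/2$ for the bound), your part (2) (Stirling valid throughout, $\Phi''\asymp 1/r$, second derivative test, saddle-location criterion $x\asymp\max\{r^2,\mu^2\}/X$), and your treatment of the Stirling-valid piece $\mathbf{\Psi}_2$ in part (1) all match the paper's proof in substance; in those regimes the phase derivative, when the saddle is absent, is of size $r\log X\gg X^{\varepsilon}$ against an amplitude with $O(1)$ derivatives in rescaled variables, so repeated integration by parts does yield genuine power savings.

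The gap is your treatment of $\mathbf{\Psi}_1$, the window $|\tau+\mu|\ll X^{\varepsilon}$ around the turning point. You propose to "use $\Phi'\approx\log(\mu/(xX))$ to conclude negligibility unless $xX\ll\mu^{1+\varepsilon}$", but this is exactly the Stirling-derived phase that you yourself note is unavailable there, so the argument is circular. Moreover, even if one extracts the oscillation of the remaining factors (the $|\tau-\mu|$ Gamma pair, $(\pi^2xX)^{-i\tau}$ and the $V^\dagger$ phase) and treats the near-coalescent Gamma ratio in $\tau+\mu$ as amplitude, that amplitude varies on scale $\Delta\tau\asymp 1$ while the usable phase derivative is only $\asymp|\log(xX/\mu)|$, a logarithm of $X$ rather than a power; each integration by parts then gains only a factor $\asymp 1/\log X$, which can never produce the $O_A(X^{-A})$ negligibility that the lemma asserts (and that is genuinely needed later, since this negligibility is what truncates the $m$-sum at $m\ll c^2\mu^{1+\varepsilon}/X$ in $S_1$). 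The paper's mechanism is different and is the same device you already used in part (3): keep $\sigma$ free, bound the window integrand in absolute value by \eqref{Gamma-2} (which holds uniformly, turning point included, with the factors read as $1+|\cdot|$), getting
\begin{equation*}
\mathbf{\Psi}_1\ \ll\ r^{1/2}\mu^{1/2+\varepsilon}\left(\frac{xX}{\mu^{1+\varepsilon}}\right)^{-\sigma},
\end{equation*}
so that taking $\sigma$ arbitrarily large gives power-saving decay as soon as $xX\gg\mu^{1+\varepsilon}$ (no phase information needed), while taking $\sigma=-1/2$ in the complementary range gives $\mathbf{\Psi}_1\ll(rxX)^{1/2}$. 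Your crude bound in the range $xX\ll\mu^{1+\varepsilon}$ is fine; it is the negligibility half of the $\mathbf{\Psi}_1$ statement that your proposal cannot deliver, and the vertical shift in $\sigma$ is the missing idea.
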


\begin{proof}
For the case  $r\asymp X\Xi/(cC)\gg X^{\varepsilon}$,
we apply the stationary phase to
the integral $V^{\dagger}\left(r,-\sigma-i\tau\right)$. Write
\bna
V^{\dagger}\left(r,-\sigma-i\tau\right)=
\int_0^{\infty}
V\left(u\right)u^{-\sigma-1}\exp\left(i\varrho(u)\right)
\mathrm{d}u,
\ena
where $\varrho(u)=-2\pi ru-\tau\log u$.
Note that
\bna
\varrho'(u)&=&-2\pi r-\tau/u,\\
\varrho^{(j)}(u)
&=&\tau (-1)^j(j-1)! u^{-j}\asymp |\tau|, \qquad j=2,3,\ldots.
\ena
By repeated integration by parts one shows that $V^{\dagger}\left(r,-\sigma-i\tau\right)$
is negligibly small unless $|\tau| \asymp r$ and $\tau<0$ (note that $r>0$ here).
The stationary point is $u_0=-\tau/(2\pi r)$.
Applying Lemma \ref{lemma:exponentialintegral} (2) with $Y=Z=1$ and
$H=R=\tau\gg X^{\varepsilon}$, we have
\bea\label{asymptotic formula}
V^{\dagger}\left(r,-\sigma-i\tau\right)
=\tau^{-1/2}
V^{\natural}_{\sigma}\left(\frac{-\tau}{2\pi r}\right)
e\left(-\frac{\tau}{2\pi }\log\frac{-\tau}{2\pi e r}\right)+O_A\left(X^{-A}\right),
\eea
where $V_{\sigma}^{\natural}(x)$ is an inert function (depending on $A$ and $\sigma$)
supported on $x\asymp 1$. Plugging \eqref{asymptotic formula} into \eqref{sigma}, we obtain
\bna
\Psi^\pm\left(x,c,\zeta\right)&=&\frac{1}{4\pi^2}\int_{-\infty}^0(\pi^2 xX)^{-\sigma-i\tau}
\rho_f^{\pm}(\sigma+i\tau)\nonumber\\&&\times
\tau^{-1/2}
V^{\natural}_{\sigma}\left(\frac{-\tau}{2\pi r}\right)
e\left(-\frac{\tau}{2\pi }\log\frac{-\tau}{2\pi e r}\right)\mathrm{d}\tau+O_A\left(X^{-A}\right),
\ena
where $r=\zeta X/(cC)>0$.
Making a change of variable $\tau\rightarrow -r\tau$,
\bea\label{The integral:1}
\Psi^\pm\left(x,c,\zeta\right)&=&\frac{\sqrt{-r}}{4\pi^2}\int_0^{\infty}
(\pi^2 xX)^{-\sigma+ir\tau}\rho_f^{\pm}\left(\sigma-ir\tau\right)\nonumber\\&&\times
\tau^{-1/2}V^{\natural}_{\sigma}\left(\frac{\tau}{2\pi}\right)
e\left(\frac{r\tau}{2\pi}\log\frac{\tau}{2\pi e}\right)\mathrm{d}\tau+O_A\left(X^{-A}\right),
\eea
where by \eqref{gamma},
\bea\label{Gamma3}
\rho_f^{\pm}\left(\sigma-ir\tau\right)=
\prod_{\pm}\frac{\Gamma\left(\frac{1+\sigma-i(r\tau\pm \mu)}{2}\right)}
{\Gamma\left(\frac{-\sigma+i(r\tau\pm \mu)}{2}\right)}\pm
\prod_{\pm}\frac{\Gamma\left(\frac{2+\sigma-i(r\tau\pm \mu)}{2}\right)}
{\Gamma\left(\frac{1-\sigma+i(r\tau\pm \mu)}{2}\right)}.
\eea

(1) For $\mu^{1-\varepsilon}\ll r\ll\mu^{1+\varepsilon}$, we divide the range of
$\tau$ into two pieces: $$(0,\infty)=\{\tau||r\tau-\mu|\leq \mu^{\varepsilon}\}
\cup \{\tau||r\tau-\mu|>\mu^{\varepsilon}\}:=\mathbf{I}_1+\mathbf{I}_2$$
and correspondingly denote by the integral over $\mathbf{I}_j$ by
$\mathbf{\Psi}_j$, $j=1,2$. Then by \eqref{Gamma-2},
\bna
\mathbf{\Psi}_1&\ll&r^{1/2}(xX)^{-\sigma}\int_{\mathbf{I}_1}
(|r\tau-\mu||r\tau+\mu|)^{\sigma+1/2}
\left|V^{\natural}_{\sigma}\left(\frac{\tau}{2\pi}\right)\right|\mathrm{d}\tau\\
&\ll&r^{1/2}\mu^{1/2+\varepsilon}(xX/\mu^{1+\varepsilon})^{-\sigma}.
\ena
By taking $\sigma$ sufficiently large, one sees that $\mathbf{\Psi}_1$
is negligibly small unless $xX\ll \mu^{1+\varepsilon}$,
in which case by taking $\sigma=-1/2$ we have
the estimate
\bea\label{p-1}
\mathbf{\Psi}_1\ll (rxX)^{1/2}.
\eea

For $\tau\in \mathbf{I}_2$, by \eqref{Gamma3} and Stirling's approximation
in \eqref{Stirling approximation},
we have
\bea\label{Gamma4}
&&\rho_f^{\pm}\left(\sigma-ir\tau\right)=\left(\prod_{\pm}
\left(\frac{|r\tau\pm \mu|}{2e}\right)^{-i(r\tau\pm \mu)}
|r\tau\pm \mu|^{\sigma+1/2}\right)\nonumber
\\&&\times
\big(h_{\sigma,1}(r\tau+\mu)h_{\sigma,1}(r\tau-\mu)
\pm h_{\sigma,2}(r\tau+\mu)h_{\sigma,2}(r\tau-\mu)\big)+
O_{\sigma,A}\big(X^{-A}\big),
\eea
where $h_{\sigma,j}(x)$, $j=1,2$, satisfy $h_{\sigma,j}(x)\ll_{\sigma,j,A} 1$ and
$
x^{\ell}h_{\sigma,j}^{(\ell)}(x)\ll_{\sigma,j,\ell,A} x^{-1}
$
for any integer $\ell\geq 1$.
Then by \eqref{The integral:1} and \eqref{Gamma4},
\bea\label{The integral:20}
\mathbf{\Psi}_2&=&\frac{\sqrt{-r}}{4\pi^2}\int_0^{\infty}
(\pi^2 xX)^{-\sigma+ir\tau}\left(\prod_{\pm}
\left(\frac{|r\tau\pm \mu|}{2e}\right)^{-i(r\tau\pm \mu)}
|r\tau\pm \mu|^{\sigma+1/2}\right)\nonumber
\\&&\times
\big(h_{\sigma,1}(r\tau+\mu)h_{\sigma,1}(r\tau-\mu)
\pm h_{\sigma,2}(r\tau+\mu)h_{\sigma,2}(r\tau-\mu)\big)\nonumber\\&&\times
\tau^{-1/2}V^{\natural}_{\sigma}\left(\frac{\tau}{2\pi}\right)
e\left(\frac{r\tau}{2\pi}\log\frac{\tau}{2\pi e}\right)\mathrm{d}\tau+\mathbf{\Psi}_3,
\eea
where
\bna
\mathbf{\Psi}_3\ll r^{1/2}(xX)^{-\sigma}\int_{\mathbf{I}_1}
(|r\tau-\mu||r\tau+\mu|)^{\sigma+1/2}\left|V^{\natural}_{\sigma}
\left(\frac{\tau}{2\pi}\right)\right|\mathrm{d}\tau
\ll r^{1/2}\mu^{1/2+\varepsilon}(xX/\mu^{1+\varepsilon})^{-\sigma},
\ena
which can be negligibly small unless $xX\ll \mu^{1+\varepsilon}$,
in which case by taking $\sigma=-1/2$ we have
\bea\label{p-2}
\mathbf{\Psi}_3\ll (rxX)^{1/2}.
\eea
Denote the first term in \eqref{The integral:20} by $\mathbf{\Psi}_2^0$. Then
\bna
\mathbf{\Psi}_2^0\ll r^{1/2}(xX)^{-\sigma}\int_{\tau\asymp 1}(|r\tau-\mu||r\tau+\mu|)^{\sigma+1/2}
\mathrm{d}\tau
\ll r\mu^{1/2+\varepsilon}\left(\frac{xX}{r\mu^{1+\varepsilon}}\right)^{-\sigma},
\ena
which can be negligibly small unless $xX\ll r\mu^{1+\varepsilon}$,
in which case by taking $\sigma=-1/2$,
\bna
\mathbf{\Psi}_2^0=(-r)^{1/2}(xX)^{1/2}\int_0^{\infty}G(\tau)
\exp\left(i\eta(\tau)\right)\mathrm{d}\tau,
\ena
where, temporarily,
\bna
G(\tau)=\frac{1}{4\pi\sqrt{\tau}}V^{\natural}_{\sigma}\left(\frac{\tau}{2\pi}\right)
\big(h_{\sigma,1}(r\tau+\mu)h_{\sigma,1}(r\tau-\mu)
\pm h_{\sigma,2}(r\tau+\mu)h_{\sigma,2}(r\tau-\mu)\big)
\ena
with $\sigma=-1/2$, and
\bna
\eta(\tau)=r\tau\log \frac{\pi xX}{2e}-(r\tau+\mu)\log\frac{|r\tau+\mu|}{2e}
-(r\tau-\mu)\log\frac{|r\tau-\mu|}{2e}+r\tau\log \tau.
\ena
Note that
\bna
\eta'(\tau)&=&-r\log\frac{|r\tau-\mu||r\tau+\mu|}{2\pi xX\tau},\\
\eta''(\tau)&=&-r\left(\frac{1}{\tau-\mu/r}
+\frac{1}{\tau+\mu/r}-\frac{1}{\tau}\right)
\ena
and
\bna
\int_{|r\tau-\mu|>\sqrt{r} }
\left|\frac{\mathrm{d}G(\tau)}{\mathrm{d}\tau}\right|\mathrm{d}\tau
\ll \max_{\tau\asymp 1}\left\{
\frac{r}{|r\tau-\mu|^2}
,\frac{r}{|r\tau+\mu|^2},1\right\}\ll 1.
\ena
Moreover, for $|r\tau-\mu|>\sqrt{r}$ and $\mu^{1-\varepsilon}\ll r\ll \mu^{1+\varepsilon}$,
\bna
\eta''(\tau)\asymp r\max\limits_{\tau\asymp 1}|\tau-\mu/r|^{-1}.
\ena
Then by Lemma \ref{lem: 2st derivative test, dim 1},
\bna
&&(-r)^{1/2}(xX)^{1/2}\int_{|r\tau-\mu|>\sqrt{r}}G(\tau)
\exp\left(i\eta(\tau)\right)\mathrm{d}\tau\\
&\ll& (xX)^{1/2}\min\limits_{|r\tau-\mu|>\sqrt{r},\tau\asymp 1}
|\tau-\mu/r|^{1/2}\\
&\ll&(xX)^{1/2}.
\ena
Trivially, we have
\bna
(-r)^{1/2}(xX)^{1/2}\int_{|r\tau-\mu|\leq \sqrt{r} }G(\tau)
\exp\left(i\eta(\tau)\right)\mathrm{d}\tau\ll (xX)^{1/2}.
\ena
Assembling the above results, we conclude that
\bea\label{p-3}
\mathbf{\Psi}_2^0\ll (xX)^{1/2}.
\eea
Then the first statement follows from
\eqref{p-1} and \eqref{The integral:20}--\eqref{p-3}.

(2) For $r\ll\mu^{1-\varepsilon}$, we
take $\sigma=-1/2$ in \eqref{The integral:1} to get
\bea\label{The integral:2}
\Psi^\pm\left(x,c,\zeta\right)&=&\frac{\sqrt{-r}}{4\pi^2}\int_0^{\infty}
(\pi^2 xX)^{1/2+ir\tau}\rho_f^{\pm}\left(-\frac{1}{2}-ir\tau\right)\nonumber\\&&\times
\tau^{-1/2}V^{\natural}\left(\frac{\tau}{2\pi}\right)
e\left(\frac{r\tau}{2\pi}\log\frac{\tau}{2\pi e}\right)\mathrm{d}\tau+O_A\left(X^{-A}\right),
\eea
where $V^{\natural}(x)=V^{\natural}_{-1/2}(x)$ and by \eqref{Gamma3},
\bna
\rho_f^{\pm}\left(-\frac{1}{2}-ir\tau\right)=
\prod_{\pm}\frac{\Gamma\left(\frac{1/2-i(r\tau\pm \mu)}{2}\right)}
{\Gamma\left(\frac{1/2+i(r\tau\pm \mu)}{2}\right)}\pm
\prod_{\pm}\frac{\Gamma\left(\frac{3/2-i(r\tau\pm \mu)}{2}\right)}
{\Gamma\left(\frac{3/2+i(r\tau\pm \mu)}{2}\right)}.
\ena
Since $r\ll\mu^{1-\varepsilon}$, using Stirling's approximation
in \eqref{Stirling approximation}, we derive
\bea\label{gamma-22}
&&\rho_f^{\pm}\left(-\frac{1}{2}-ir\tau\right)
=\left(\frac{\mu-r\tau}{2e}\right)^{-i(r\tau-\mu)}
\left(\frac{r\tau+\mu}{2e}\right)^{-i(r\tau+\mu)}\nonumber\\
&&\qquad\times
\big(h_{1}(r\tau-\mu)h_1(r\tau+\mu)\pm h_2(r\tau-\mu)h_2(r\tau+\mu)\big)+
O_{A}\big(X^{-A}\big),
\eea
where $h_j(x)$, $j=1,2$, satisfy $h_j(x)\ll_j1$ and
$
x^{\ell}h_{j}^{(\ell)}(x)\ll_{j,\ell,A} x^{-1}
$
for any integer $\ell\geq 1$. Plugging \eqref{gamma-22} into \eqref{The integral:2}, one has
\bea\label{The integral:3}
\Psi^\pm\left(x,c,\zeta\right)=\frac{(-rxX)^{1/2}}{4\pi}\int_0^{\infty}
V_0^{\pm}(\tau)\exp\left(i\varrho_0(\tau)\right)\mathrm{d}\tau+O_A\left(X^{-A}\right),
\eea
where
\bna
V_0^{\pm}(\tau)=\frac{1}{\sqrt{\tau}}V^{\natural}\left(\frac{\tau}{2\pi}\right)
\big(h_{1}(r\tau-\mu)h_1(r\tau+\mu)\pm h_2(r\tau-\mu)h_2(r\tau+\mu)\big)
\ena
satisfying $\mathrm{d}^{\ell}V_0^{\pm}(\tau)/\mathrm{d}\tau^{\ell}\ll_{\ell} 1$ for any integer $\ell\geq 0$, and
\bna
\varrho_0(\tau)=r\tau\log \frac{\pi xX}{2e}-(r\tau-\mu)\log\frac{\mu-r\tau}{2e}
-(r\tau+\mu)\log\frac{r\tau+\mu}{2e}+r\tau\log \tau.
\ena
We compute
\bna
\varrho'_0(\tau)&=&-r\log\frac{\mu^2-r^2\tau^2}{2\pi xX\tau},\\
\varrho''_0(\tau)&=&-r\left(\frac{1}{\tau-\mu/r}
+\frac{1}{\tau+\mu/r}-\frac{1}{\tau}\right)\asymp r,\quad j=2,3,\ldots.
\ena
By repeated integration by parts one shows that $\Psi^\pm\left(x,c,\zeta\right)$
is negligibly small unless $xX\asymp \mu^2$.
By the second derivative test in Lemma \ref{lem: 2st derivative test, dim 1}, we have
\bna
\Psi^\pm\left(x,c,\zeta\right)\ll (xX)^{1/2}.
\ena

For $r\gg \mu^{1+\varepsilon}$, the proof is similar as that for the case
$r\ll \mu^{1-\varepsilon}$ and we will be brief.
In this case, the formula \eqref{The integral:3} still holds.
Thus repeated integration by parts shows that $\Psi^\pm\left(x,c,\zeta\right)$
is negligibly small unless $xX\asymp r^2$.
Note that the total variation of $V_0^{\pm}(\tau)$ is bounded by 1 and the second
derivative of the phase function is of size $r$.
By the second derivative test in Lemma \ref{lem: 2st derivative test, dim 1}, we have
\bna
\Psi^\pm\left(x,c,\zeta\right)\ll (xX)^{1/2}.
\ena
This proves the second statement of the lemma.

(3) For $X\Xi/(cC)\ll X^{\varepsilon}$, we have $r\asymp X\Xi/(cC)\ll X^{\varepsilon}$.
By repeated integration by parts, one has (see \cite[Lemma 5]{Munshi})
\bna
V^\dagger(r,\sigma+i\tau)\ll_{\sigma} \min\left\{1,
\left(\frac{1+
|r|}{|\tau|}\right)^{j}\right\}.
\ena
Thus
$$V^{\dagger}\left(\frac{\zeta X}{cC},-\sigma-i\tau\right)\ll \left(\frac{X^{\varepsilon}}{|\tau|}\right)^j,$$
which implies that the contribution from $|\tau|\geq X^{\varepsilon}$
can be arbitrarily small by taking $j$ sufficiently large.
Using \eqref{sigma} and the trivial estimate $V^\dagger(r,\sigma+i\tau)\ll 1$, we have
\bna
\Psi^\pm\left(x,c,\zeta\right)&=&\frac{1}{4\pi^2}
\int_{|\tau|\leq X^{\varepsilon}}(\pi^2 xX)^{-\sigma-i\tau}
\rho_f^{\pm}(\sigma+i\tau)V^{\dagger}\left(\frac{\zeta X}{cC},-\sigma-i\tau\right) \mathrm{d}\tau
+O_A(X^{-A})\\
&\ll&(xX)^{-\sigma}\int_{|\tau|\leq X^{\varepsilon}}(|\tau+\mu||\tau-\mu|)^{\sigma+1/2}\mathrm{d}\tau\\
&\ll&\mu^{1+\varepsilon}\left(xX/\mu^2\right)^{-\sigma},
\ena
which implies that the contribution from $xX\gg \mu^{2+\varepsilon}$ is negligible.
For $xX\ll \mu^{2+\varepsilon}$, we shift the line of integration in \eqref{sigma} to
$\sigma=-1/2$ to get
\bna
\Psi^\pm\left(x,c,\zeta\right)\ll (xX)^{1/2+\varepsilon}.
\ena
This finishes the proof of the lemma.
\end{proof}

Now we return to the evaluation of $S(X,\alpha,\beta,C_0,\Xi)$ in \eqref{afterVoronoi}.
Applying Lemma \ref{Pitt}, we have
\bna
S(X,\alpha,\beta,C_0,\Xi)
\ll\sup_{\zeta\asymp\Xi}\frac{X^\varepsilon}{C}\sum_{\pm}\sum_{c\sim C_0}
\sum_{m}\frac{|\lambda_{f}(m)|}{m}\left|\Psi^{\pm}\left(\frac{m}{c^2},c,\zeta\right)\right|
\left((Xc)^{1/2+\varepsilon}+\textbf{T}^{1/2}\right),
\ena
where $\textbf{T}$ is given by \eqref{sum T}.
By Lemma \ref{integral-lemma-1}, we obtain
\bea\label{S(X,alpha,beta,C0,Xi)}
S(X,\alpha,\beta,C_0,\Xi)
&\ll& \textbf{1}_{\frac{X\Xi}{C\mu^{1+\varepsilon}}\ll C_0\ll\frac{X\Xi}{C\mu^{1-\varepsilon}}}\sum_{i=1}^2S_i
+\textbf{1}_{\frac{X\Xi}{C\mu^{1-\varepsilon}}\ll C_0\ll\frac{X^{1-\varepsilon}\Xi}{C}}S_3\nonumber\\
&&+\textbf{1}_{1\leq C_0\ll\frac{X\Xi}{C\mu^{1+\varepsilon}}}S_4
+\textbf{1}_{\frac{X^{1-\varepsilon}\Xi}{C}\ll C_0\ll C}S_5,
\eea
where $\textbf{1}_\mathbf{A}=1$ is $\mathbf{A}$ is true and equals 0 otherwise,
\bna
S_1&=&\frac{X^\varepsilon}{C}\sum_{c\sim C_0}
\sum_{m\ll c^2\mu^{1+\varepsilon}/X}
\frac{|\lambda_{f}(m)|}{m}
\left(\frac{mX^2\Xi}{c^3C}\right)^{1/2}
\left((Xc)^{1/2+\varepsilon}+\textbf{T}^{1/2}\right),\\
S_2&=&\frac{X^\varepsilon}{C}\sum_{c\sim C_0}
\sum_{m\ll c\mu^{1+\varepsilon}\Xi/C}
\frac{|\lambda_{f}(m)|}{m}\left(\frac{mX}{c^2}\right)^{1/2}
\left((Xc)^{1/2+\varepsilon}+\textbf{T}^{1/2}\right),\\
S_3&=&\frac{X^\varepsilon}{C}\sum_{c\sim C_0}
\sum_{m\asymp c^2\mu^{2}/X}
\frac{|\lambda_{f}(m)|}{m}\left(\frac{mX}{c^2}\right)^{1/2}
\left((Xc)^{1/2+\varepsilon}+\textbf{T}^{1/2}\right),\\
S_4&=&\frac{X^\varepsilon}{C}\sum_{c\sim C_0}
\sum_{m\asymp X\Xi^2/C^2}
\frac{|\lambda_{f}(m)|}{m}\left(\frac{mX}{c^2}\right)^{1/2}
\left((Xc)^{1/2+\varepsilon}+\textbf{T}^{1/2}\right)
\ena
and
\bna
S_5&=&\frac{X^\varepsilon}{C}\sum_{c\sim C_0}
\sum_{m\ll c^2\mu^{2+\varepsilon}/X}
\frac{|\lambda_{f}(m)|}{m}\left(\frac{mX}{c^2}\right)^{1/2+\varepsilon}
\left((Xc)^{1/2+\varepsilon}+\textbf{T}^{1/2}\right).
\ena
Obviously, $S_3$ can be dominated by $S_5$.
We use the strategy of Liu and Ren \cite{LR} to deal with $S_1$, and $S_2$--$S_5$ can be estimated similarly.
Firstly, by \eqref{sum T} and Lemma \ref{new series} we have,
\begin{equation*}
\begin{split}
\textbf{T}&\ll(m,c)^{1/2} c_1^{1/2+\varepsilon}c_2\sum_{c=c_3c_4}c_4^{1/2}\sum_{1\leq h< X}
\sum_{u\bmod c_3}G\left(X, 2\alpha h+\frac{u}{c_3}\right)\\
&=(m,c)^{1/2}cc_1^{-1/2+\varepsilon}\sum_{c=c_3c_4}c_4^{1/2}\sum_{1\leq h< X}
\sum_{u\bmod c_3}\left\{b(0)+\sum_{|n|\geq 1}b(n)e\left(2n\alpha h+\frac{nu}{c_3}\right)\right\}\\
&\ll |T_0|+|T_1|+|T_2|+X^{-A},
\end{split}
\end{equation*}
say, where $c_1, c_2$ are defined as in Lemma \ref{Pitt}. Trivially, we have
\bea\label{T0}
T_0&=&(m,c)^{1/2}cc_1^{-1/2+\varepsilon}\sum_{c=c_3c_4}c_4^{1/2}\sum_{1\leq h< X}
\sum_{u\bmod c_3}b(0)\nonumber\\
&\ll &X(m,c)^{1/2}c^{2+\varepsilon}c_1^{-1/2}.
\eea
Moreover,
\bea\label{T1}
T_1&=&(m,c)^{1/2}cc_1^{-1/2+\varepsilon}\sum_{c=c_3c_4}c_4^{1/2}\sum_{1\leq h< X}
\sum_{u\bmod c_3}\sum_{1\leq n \leq X^{1+\varepsilon}}b(n)e\left(2n\alpha h+\frac{nu}{c_3}\right)\nonumber\\
&\ll& (\log X)(m,c)^{1/2}c^2c_1^{-1/2+\varepsilon}\sum_{c=c_3c_4}c_4^{-1/2}\sum_{1\leq k\leq X^{1+\varepsilon}/c_3}
\min\left\{X,\left\|2c_3k\alpha\right\|^{-1}\right\},
\eea
which is based on the orthogonality of additive characters and the elementary estimate
$\sum_{h\leq X}e(\xi h)\ll\min(X, \|\xi\|^{-1})$. Finally,
\begin{equation*}
\begin{split}
T_2&=(m,c)^{1/2}cc_1^{-1/2+\varepsilon}\sum_{c=c_3c_4}c_4^{1/2}\sum_{1\leq h< X}
\sum_{u\bmod c_3}\sum_{1\leq n \leq X^{1+\varepsilon}}b(-n)e\left(-2n\alpha h-\frac{nu}{c_3}\right),
\end{split}
\end{equation*}
which can be estimated similarly as $T_1$.
Hence by Cauchy--Schwartz inequality,
the Rankin--Selberg estimate in \eqref{GL2: Rankin Selberg}
and the above estimates, we have
\begin{equation*}
\begin{split}
S_1
&\ll\frac{X^{1+\varepsilon}\Xi^{1/2}}{C^{3/2}}\sum_{c\sim C_0}\frac{1}{c^{3/2}}
\sum_{m\ll c^2\mu^{1+\varepsilon}/X}\frac{|\lambda_{f}(m)|}{m^{1/2}}
(Xc)^{1/2+\varepsilon}+\frac{X^{1+\varepsilon}\Xi^{1/2}}{C^{3/2}}\sum_{c\sim C_0}\frac{1}{c^{3/2}}
\sum_{m\ll c^2\mu^{1+\varepsilon}/X}\frac{|\lambda_{f}(m)|}{m^{1/2}}\textbf{T}^{1/2}\\
&\ll\frac{X^{3/2+\varepsilon}\Xi^{1/2}}{C^{3/2}}\sum_{c\sim C_0}\frac{1}{c^{1-\varepsilon}}\left(\frac{c^2\mu^{1+\varepsilon}}{X}\right)^{1/2+\varepsilon}
+\frac{X^{1+\varepsilon}\Xi^{1/2}}{C^{3/2}}\sum_{i=0}^2\sum_{c\sim C_0}\frac{1}{c^{3/2}}
\sum_{m\ll c^2\mu^{1+\varepsilon}/X}\frac{|\lambda_{f}(m)|}{m^{1/2}}T_i^{1/2}\\
&\ll\frac{X^{1+\varepsilon}\mu^{1/2+\varepsilon}}{C^{3/2}}C_0^{1+\varepsilon}
+\sum_{i=0}^2S_{1i},
\end{split}
\end{equation*}
recalling $\Xi\ll X^\varepsilon$, where
\begin{equation*}
\begin{split}
S_{1i}&=\frac{X^{1+\varepsilon}}{C^{3/2}}\sum_{c\sim C_0}\frac{1}{c^{3/2}}
\sum_{m\ll c^2\mu^{1+\varepsilon}/X}\frac{|\lambda_{f}(m)|}{m^{1/2}}T_i^{1/2}.
\end{split}
\end{equation*}
By \eqref{T0}, Cauchy--Schwartz inequality,
the Rankin--Selberg estimate in \eqref{GL2: Rankin Selberg}
and Lemma \ref{LR lemma}, we have
\bea\label{S10}
S_{10}
&\ll&\frac{X^{3/2+\varepsilon}}{C^{3/2}}\sum_{c\sim C_0}c^{-1/2+\varepsilon}c_1^{-1/4}
\sum_{m\ll c^2\mu^{1+\varepsilon}/X}\frac{|\lambda_{f}(m)|}{m^{1/2}}(m,c)^{1/4}\nonumber\\
&\ll&\frac{X^{3/2+\varepsilon}}{C^{3/2}}\sum_{c\sim C_0}c^{-1/2+\varepsilon}c_1^{-1/4}
\left(\sum_{m\ll c^2\mu^{1+\varepsilon}/X}|\lambda_{f}(m)|^2\right)^{1/2}
\left(\sum_{m\ll c^2\mu^{1+\varepsilon}/X}\frac{(m,c)^{1/2}}{m}\right)^{1/2}\nonumber\\
&\ll&\frac{X^{1+\varepsilon}\mu^{1/2+\varepsilon}}{C^{3/2}}
C_0^{5/4+\varepsilon}.
\eea
Similarly, by \eqref{T1}, Cauchy--Schwartz inequality,
the Rankin--Selberg estimate in \eqref{GL2: Rankin Selberg} and
Lemmas \ref{LR lemma}-\ref{x series}, we have
\bea\label{S11}
S_{11}
&\ll&\frac{X^{1+\varepsilon}}{C^{3/2}}\sum_{c\sim C_0}c^{-\frac{1}{2}}c_1^{-\frac{1}{4}+\varepsilon}
\sum_{m\ll c^2\mu^{1+\varepsilon}/X}\frac{|\lambda_{f}(m)|}{m^{1/2}}(m,c)^{\frac{1}{4}}
\left(\sum_{c=c_3c_4}c_4^{-\frac{1}{2}}\sum_{1\leq k\leq \frac{X^{1+\varepsilon}}{c_3}}
\min\left\{X,\left\|2c_3k\alpha\right\|^{-1}\right\}\right)^{1/2}\nonumber\\
&\ll&\frac{X^{1/2+\varepsilon}\mu^{1/2+\varepsilon}}{C^{3/2}}
\left(\sum_{c\sim C_0}c^{1+\varepsilon}c_1^{-1/2+\varepsilon}\right)^{1/2}
\left(\sum_{c_4\ll C_0}c_4^{-1/2}\sum_{c_3\sim C_0/c_4}\sum_{1\leq  k\leq \frac{X^{1+\varepsilon}}{c_3}}
\min\left\{X,\left\|2c_3k\alpha\right\|^{-1}\right\}\right)^{1/2}\nonumber\\
&\ll&\frac{X^{1/2+\varepsilon}\mu^{1/2+\varepsilon}}{C^{3/2}}
C_0^{3/4+\varepsilon}
\left(\sum_{c_4\ll C_0}c_4^{-1/2}\sum_{k'\ll X^{1+\varepsilon}}\tau(k')
\min\left\{X,\left\|2\alpha k'\right\|^{-1}\right\}\right)^{1/2}\nonumber\\
&\ll&\frac{X^{1/2+\varepsilon}\mu^{1/2+\varepsilon}}{C^{3/2}}
C_0^{3/4+\varepsilon}
\left(C_0^{1/2+\varepsilon}
\left(\frac{X^{1+\varepsilon}}{q}+1\right)(X+q\log q)\right)^{1/2}\nonumber\\
&\ll&\frac{X^{1+\varepsilon}\mu^{1/2+\varepsilon}}{C^{3/2}}
C_0^{1+\varepsilon}\left(\frac{X}{q}+\frac{q}{X}\right)^{1/2},
\eea
and the estimate for $S_{12}$ is entirely similar as that for $S_{11}$. Therefore, by \eqref{S10} and \eqref{S11}, we have
\bea\label{S1 upper bound}
S_{1}&\ll&\frac{X^{1+\varepsilon}\mu^{1/2+\varepsilon}}{C^{3/2}}
C_0^{5/4+\varepsilon}+
\frac{X^{1+\varepsilon}\mu^{1/2+\varepsilon}}{C^{3/2}}
C_0^{1+\varepsilon}\left(\frac{X}{q}+\frac{q}{X}\right)^{1/2}.
\eea
Similarly, we get
\bea\label{S2 upper bound}
S_{2}&\ll&\frac{X^{1+\varepsilon}\mu^{1/2+\varepsilon}}{C^{3/2}}C_0^{5/4+\varepsilon}+
\frac{X^{1+\varepsilon}\mu^{1/2+\varepsilon}}{C^{3/2}}
C_0^{1+\varepsilon}\left(\frac{X}{q}+\frac{q}{X}\right)^{1/2},\\
S_{4}&\ll&\frac{X^{3/2+\varepsilon}}{C^2}C_0^{3/4+\varepsilon}
+\frac{X^{3/2+\varepsilon}}{C^2}C_0^{1/2+\varepsilon}
\left(\frac{X}{q}+\frac{q}{X}\right)^{1/2}
\eea
and
\bea\label{S5 upper bound}
S_{3},S_{5}\ll\frac{X^{1/2+\varepsilon}\mu^{1+\varepsilon}}{C}
C_0^{7/4+\varepsilon}+\frac{X^{1/2+\varepsilon}\mu^{1+\varepsilon}}{C}
C_0^{3/2+\varepsilon}\left(\frac{X}{q}+\frac{q}{X}\right)^{1/2}.
\eea
Plugging \eqref{S1 upper bound}--\eqref{S5 upper bound} into \eqref{S(X,alpha,beta,C0,Xi)}
and then inserting the resulting upper bounds into \eqref{C range}, we have
\begin{equation*}
\begin{split}
S(X,\alpha,\beta)
\ll&\frac{X^{9/4+\varepsilon}\mu^{1+\varepsilon}}{C^{11/4}}+
\frac{X^{2+\varepsilon}\mu^{1+\varepsilon}}{C^{5/2}}
\left(\frac{X}{q}+\frac{q}{X}\right)^{1/2}\\
&+X^{1/2+\varepsilon}\mu^{1+\varepsilon}
C^{3/4+\varepsilon}+X^{1/2+\varepsilon}\mu^{1+\varepsilon}
C^{1/2+\varepsilon}\left(\frac{X}{q}+\frac{q}{X}\right)^{1/2}.
\end{split}
\end{equation*}
We take $C=X^{1/2}$ to balance the contribution and obtain
\begin{equation}\label{S(X,alpha,beta)}
\begin{split}
&S(X,\alpha,\beta)
\ll X^{7/8+\varepsilon}\mu^{1+\varepsilon}
+X^{3/4+\varepsilon}\mu^{1+\varepsilon}
\left(\frac{X}{q}+\frac{q}{X}\right)^{1/2}.\\
\end{split}
\end{equation}

We will apply \eqref{S(X,alpha,beta)} for larger $q$.
For smaller $q$, we follow closely Pitt \cite{Pitt}
and combine the results of Godber \cite{Godber}
to give an estimate for the aimed exponential sums
in Theorem \ref{the quadratic exponential sums},
which does not depend on $f$.
\begin{lemma}\label{the eatimate relative to Pitt}
Let $\lambda_f(n)$ be the normalized Fourier coefficients
of a Maass cusp form for $\rm SL_2(\mathbb{Z})$
with Laplacian eigenvalue $\lambda_f(\Delta)=1/4+\mu^2$.
Let $\alpha$ satisfy \eqref{Dirichlet's theorem}. Then for any
$\varepsilon>0$, we have
\bna
\sum_{n \leq X}\lambda_f(n)e\left(\alpha n^2+\beta n\right)\ll
X^{1/2+\varepsilon}q^{1/2}\lambda_f(\Delta)^{1/4+\varepsilon}
+X^{3/2+\varepsilon}Q^{-1/2}\lambda_f(\Delta)^{1/4+\varepsilon},
\ena
where the implied constant depends only on $\varepsilon$.
\end{lemma}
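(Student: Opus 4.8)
The plan is to follow Pitt's dispersion-style argument, but to extract the exponential sum over $n$ into a Weyl-type shift and then appeal to Godber's \emph{linear} estimate \eqref{linear} instead of using a bound depending on $f$ through Rankin--Selberg alone. Write $T=\sum_{n\leq X}\lambda_f(n)e(\alpha n^2+\beta n)$. The key device is the classical Weyl differencing (van der Corput) step: for a parameter $H\geq 1$ one has the standard inequality
\[
|T|^2\ll \frac{X}{H}\sum_{|h|<H}\Bigl|\sum_{n}\lambda_f(n)\overline{\lambda_f(n+h)}\,e\bigl(2\alpha h n+(\alpha h^2+\beta h)\bigr)\Bigr|+\frac{X^{1+\varepsilon}}{H}\sum_{n\leq X}|\lambda_f(n)|^2,
\]
where the diagonal $h=0$ contributes $O(X^{1+\varepsilon}/H)$ after the Rankin--Selberg bound \eqref{GL2: Rankin Selberg}. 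The effect of differencing is to replace the quadratic phase $\alpha n^2$ by the \emph{linear} phase $2\alpha h n$, which is exactly the shape to which Godber's estimate \eqref{linear} applies.

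First I would handle the off-diagonal terms $h\neq 0$. For each fixed $h$, I would open the shifted product $\lambda_f(n)\overline{\lambda_f(n+h)}$ using the Hecke relation, or more directly treat $\sum_n \lambda_f(n)\overline{\lambda_f(n+h)}e(2\alpha h n)$ as a linear exponential sum in $n$ weighted by the shifted-convolution coefficients. The cleaner route, and the one that makes the dependence on $f$ explicit, is to apply \eqref{linear} with the frequency $2\alpha h$ in place of $\alpha$ and with $\beta$ absorbing the constant phase; this yields a bound of the form $X^{1/2+\varepsilon}\lambda_f(\Delta)^{1/4+\varepsilon}$ for each admissible $h$, provided I first separate the two coefficients so that a genuine single Fourier coefficient is being summed against a linear phase. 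Here the Dirichlet approximation \eqref{Dirichlet's theorem} enters: since $|\alpha-\ell/q|\leq 1/(qQ)$, the frequencies $2\alpha h$ for $|h|<H$ cluster near the rationals $2\ell h/q$, and the spacing governed by $q$ controls how many $h$ give a genuinely oscillating phase versus a near-stationary one. Summing the per-$h$ bound over $|h|<H$ and inserting into the differencing inequality produces a total of roughly $\frac{X}{H}\cdot H\cdot X^{1/2+\varepsilon}\lambda_f(\Delta)^{1/4+\varepsilon}$, i.e. $X^{3/2+\varepsilon}\lambda_f(\Delta)^{1/4+\varepsilon}$, before optimizing in $H$.

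The optimization is what produces the two displayed terms. Balancing the diagonal contribution $X^{1+\varepsilon}/H$ against the off-diagonal contribution and recalling that the effective length of the $h$-sum interacts with $q$ through the saving $\min(X,\|2\alpha h q'\|^{-1})$-type estimates (Lemma \ref{x series}), I expect the optimal choice of $H$ to be tied to $Q$, giving a main off-diagonal term of size $X^{3/2+\varepsilon}Q^{-1/2}\lambda_f(\Delta)^{1/4+\varepsilon}$ and a complementary term of size $X^{1/2+\varepsilon}q^{1/2}\lambda_f(\Delta)^{1/4+\varepsilon}$ from the terms where the phase is nearly stationary (the $h$ for which $q\mid 2h\ell$, contributing $O(H/q)$ near-diagonal blocks each of length $X$). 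Taking the square root of $|T|^2$ then gives the stated bound. The main obstacle I anticipate is the bookkeeping in the off-diagonal sum: after differencing one must genuinely reduce the shifted product $\lambda_f(n)\overline{\lambda_f(n+h)}$ to a form to which Godber's one-dimensional estimate \eqref{linear} applies with the correct $\lambda_f(\Delta)^{1/4}$ dependence, rather than incurring a worse power of $\mu$; keeping the exponent of $\lambda_f(\Delta)$ at $1/4$ (not $1/2$) throughout, while simultaneously tracking the $q$- and $Q$-dependence coming from \eqref{Dirichlet's theorem} and Lemma \ref{x series}, is the delicate step.
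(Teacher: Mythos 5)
There is a genuine gap in your plan, and it sits exactly at the step you yourself flag as ``delicate.'' After Weyl differencing, the inner sums are $\sum_n\lambda_f(n)\overline{\lambda_f(n+h)}\,e(2\alpha hn+\cdots)$, whose coefficients are \emph{shifted convolutions}, not individual Fourier coefficients. Godber's bound \eqref{linear} applies only to $\sum_n\lambda_f(n)e(\gamma n)$ and says nothing about such sums, and there is no way to ``separate'' the two coefficients: the Hecke relations control the multiplicative convolution $\lambda_f(m)\lambda_f(n)=\sum_{d\mid(m,n)}\lambda_f(mn/d^2)$ and are useless for the additive shift $n\mapsto n+h$. Getting cancellation in $\sum_{n\le X}\lambda_f(n)\lambda_f(n+h)e(2\alpha hn)$ uniformly in $h$ and $\alpha$ is the shifted-convolution problem, which is at least as deep as the original question; your plan defers precisely the hard part. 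Moreover, even granting optimal treatment of those sums, differencing is structurally too lossy to yield this lemma. Take $\alpha=\ell/q$ exactly with $q$ small: for the $\asymp H/q+1$ shifts $h$ with $q\mid 2h$, the phase $2\alpha hn$ is constant modulo $1$, and within your toolkit (Cauchy--Schwarz and \eqref{GL2: Rankin Selberg}) these terms can only be bounded by $X^{1+\varepsilon}$ each, contributing $\frac{X}{H}\left(\frac{H}{q}+1\right)X^{1+\varepsilon}$ to $|T|^2$. This forces a floor of order $X^{1+\varepsilon}q^{-1/2}$ for $|T|$, which exceeds the asserted $X^{1/2+\varepsilon}q^{1/2}\lambda_f(\Delta)^{1/4+\varepsilon}$ whenever $q<X^{1/2}\lambda_f(\Delta)^{-1/4}$, and exceeds the $X^{7/8+\varepsilon}\lambda_f(\Delta)^{1/2+\varepsilon}$ needed in the application once $q\ll X^{1/4}\lambda_f(\Delta)^{-1}$. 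So the approach cannot prove the statement, independently of the shifted-convolution issue.

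The paper's proof never squares the sum. Following Pitt's Section 5 with Godber's input, one writes $\alpha=\ell/q+\delta$, $|\delta|\le 1/(qQ)$ as in \eqref{Dirichlet's theorem}, splits $n$ into residue classes modulo $q$ and detects them by additive characters: the quadratic phase $e(\ell n^2/q)$ then enters only through Gauss sums of modulus $\ll q^{1/2}$ against linear characters $e(an/q)$, so the total cost of the phase modulo $q$ is a factor $q^{1/2}$. The remaining phase $e(\delta n^2)$ is slowly varying: cutting $[1,X]$ into $O(1+|\delta|^{1/2}X)$ blocks of length $\min(X,|\delta|^{-1/2})$, on each block it has bounded variation and is removed by partial summation at cost $O(1)$, after which \eqref{linear} bounds each resulting linear sum by $X^{1/2+\varepsilon}\lambda_f(\Delta)^{1/4+\varepsilon}$. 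Collecting, one gets $q^{1/2}\left(1+|\delta|^{1/2}X\right)X^{1/2+\varepsilon}\lambda_f(\Delta)^{1/4+\varepsilon}$, and since $q^{1/2}|\delta|^{1/2}\le Q^{-1/2}$ this is exactly the two terms of the lemma. Note how the $q^{1/2}$ and $Q^{-1/2}$ arise from the Gauss sums and the block count respectively; this mechanism is linear in the sum and has no analogue in a differencing argument.
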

\begin{proof}
By Godber \cite[Theorem 1.2]{Godber},
for any $\gamma \in\mathbb{R}$,
\bea\label{smaller q}
\sum_{n \leq X}\lambda_f(n)e(\gamma n)
\ll X^{1/2+\varepsilon}\lambda_f(\Delta)^{1/4+\varepsilon},
\eea
where the impled constant depends only on $\varepsilon$.
Then Lemma \ref{the eatimate relative to Pitt} holds by
following closely \cite[Section 5]{Pitt} and replacing the estimate
$\sum_{n \leq X}\lambda_f(n)e(\gamma n)
\ll X^{1/2}\log X$
by \eqref{smaller q}.
\end{proof}

\noindent {\bfseries Completion of the proof of Theorem \ref{the quadratic exponential sums}.}
Take $Q=X^{5/4}$.
If the Dirichlet approximant to $\alpha$
has $q\leq X^{3/4}\lambda_f(\Delta)^{1/2}$, we apply
Lemma \ref{the eatimate relative to Pitt},
whereas if $X^{3/4}\lambda_f(\Delta)^{1/2}<q\leq Q$
then we apply $\eqref{S(X,alpha,beta)}$. In either case
we obtain the estimate
$O(X^{7/8+\varepsilon}\lambda_f(\Delta)^{1/2+\varepsilon})$
as in the statement of
Theorem \ref{the quadratic exponential sums}.

\medskip

\section{Proof of Theorem \ref{coefficient sum}}\label{section 2}

By dyadic subdivision, we only need to estimate the sum
\bna
\sum_{n\sim X}\lambda_f(n).
\ena
Let $h$ be a nonnegative smooth function supported in $[1,2]$
such that $h(x)=1$ for $x\in [1+\eta,\, 2-\eta]$ ($\eta>0$)
and $h^{(j)}(x)\ll_j \eta^{-j}$ for any integer $j\geq 0$.
Assume the Ramanujan--Petersson conjecture $\lambda_f(n)\ll n^{\varepsilon}$. Then we have
\bna
\sum_{n\sim X}\lambda_f(n)=\sum_{n\geq 1}\lambda_f(n)h\left(\frac{n}{X}\right)+
O\big(\eta X^{1+\varepsilon}\big).
\ena
By Mellin inversion, one has
$$
h(x)=\frac{1}{2\pi i}\int_{(2)}\widetilde{h}(s)x^{-s}\mathrm{d}s,
$$
where $\widetilde{h}(s)=\int_0^{\infty}h(x)x^{s-1}\mathrm{d}x$.
It follows that
\bea\label{aim-sum-1}
\sum_{n\sim X}\lambda_f(n)=
\frac{1}{2\pi i}\int_{(2)}\widetilde{h}(s)L(s,f)X^{s}\mathrm{d} s+
O\big(\eta X^{1+\varepsilon}\big).
\eea
By shifting the line of integration in \eqref{aim-sum-1} to
Re$(s)=-\varepsilon$, we get
\bea\label{1}
\sum_{n\sim X}\lambda_f(n)=
\frac{X^{-\varepsilon}}{2\pi}\int_{\mathbb{R}}X^{it}\widetilde{h}(-\varepsilon+it)
L(-\varepsilon+it,f)\mathrm{d} t+
O\big(\eta X^{1+\varepsilon}\big).
\eea
Repeated integration by parts shows that for any integer $j\geq 1$,
\bea\label{Tilde-h}
\widetilde{h}(s)=\frac{(-1)^{j}}{s(s+1)\cdots (s+j-1)}\int_{0}^{\infty}
h^{(j)}(x)x^{s+j-1} \mathrm{d} x \ll \eta(\eta|s|)^{-j}.
\eea
Thus for $T\geq 1$ and $j>2$,
\bna
&&\int_{|t|\geq T}X^{it}\widetilde{h}(-\varepsilon+it)
L(-\varepsilon+it,f)\mathrm{d} t\\
&\ll&\int_{|t|\geq T}\eta(\eta|t|)^{-j}
\lambda_f(\Delta)^{1/2+\varepsilon}|t|^{1+\varepsilon}\mathrm{d} t\\
&\ll&\lambda_f(\Delta)^{1/2+\varepsilon}\eta T^2 (\eta T)^{-j},
\ena
where we have used the convexity bound (see \cite[(5.20)]{IK})
\bna
L(\sigma+it,f)
\ll_{\sigma,\varepsilon}
(\left(1+|t+\mu|\right)\left(1+|t-\mu|\right))^{(1-\sigma)/2+\varepsilon}.
\ena
Consequently, by taking $j$ sufficiently large, the contribution from
$T\geq (\lambda_f(\Delta)X)^{\varepsilon}\eta^{-1}$ is negligible.
Moreover, by taking $j=1$ in \eqref{Tilde-h},
for $T_1=X^{1/3}\lambda_f(\Delta)^{-\theta}$,
where $\theta$ is a parameter to be optimized later,
\bna
&&\int_{|t|\leq T_1}X^{it}\widetilde{h}(-\varepsilon+it)
L(-\varepsilon+it,f)\mathrm{d} t\\
&\ll&\int_{|t|\leq T_1}(1+|t|)^{-1}
\lambda_f(\Delta)^{1/2+\varepsilon}(1+|t|)^{1+\varepsilon}\mathrm{d} t\\
&\ll&\lambda_f(\Delta)^{1/2-\theta+\varepsilon} X^{1/3}.
\ena
Therefore, by inserting a dyadic smooth partition of unit to the $t$-integral in \eqref{1}, one has
\bea\label{dyadic}
\sum_{n\sim X}\lambda_f(n)=
\frac{X^{-\varepsilon}}{2\pi}\sum_{T_1\ll T\ll (\lambda_f(\Delta)X)^{\varepsilon}\eta^{-1}
\atop T\, \text{dyadic}}\mathcal{I}(T)+
O\left(\eta X^{1+\varepsilon}+
\lambda_f(\Delta)^{1/2-\theta+\varepsilon} X^{1/3}\right),
\eea
where
\bea\label{I(T)}
\mathcal{I}(T):=\int_{\mathbb{R}}X^{it}\widetilde{h}(-\varepsilon+it)
L(-\varepsilon+it,f)\omega\left(\frac{|t|}{T}\right)\mathrm{d} t
\eea
with $\omega(x)\in C_c^{\infty}(1,2)$ satisfying $\omega^{(j)}(x)\ll_j 1$ for any integer $j\geq 0$.

For $T>|\mu|^{1+\varepsilon}$, we apply the functional equation \eqref{functional equation}
for $L(-\varepsilon+it,f)$, introduce
the series \eqref{Dirichlet series} and then integrate termwise to obtain
\bna
\mathcal{I}(T)
&\ll& \sup_{x\in[1,\, 2]} \bigg|\int_{\mathbb{R}}\frac{(Xx)^{it}}{\varepsilon-it}\gamma(1+\varepsilon-it)
L(1+\varepsilon-it,f)\omega\left(\frac{|t|}{T}\right)\mathrm{d} t \bigg|\nonumber\\
&\ll&\sup_{x\in[1,\, 2]} \bigg|\sum_{n=1}^{\infty}\frac{\lambda_f(n)}{n^{1+\varepsilon}}
\int_{\mathbb{R}}\frac{(Xxn)^{it}}{\varepsilon-it}\gamma(1+\varepsilon-it)
\omega\left(\frac{|t|}{T}\right)\mathrm{d} t \bigg|,
\ena
where $\gamma(s)$ is defined in \eqref{Gamma}.
By Stirling's formula in \eqref{Stirling approximation},
for $|t|\asymp T>|\mu|^{1+\varepsilon}$,
\bea\label{Gamma-asymptotic}
\gamma(\sigma+it)&=&\pi^{1-2\sigma-2it}\prod_{\pm}
\frac{\Gamma\left(\frac{\sigma+\eta+i(t\pm \mu)}{2}\right)}{
\Gamma\left(\frac{1-\sigma+\eta-i(t\pm\mu)}{2}\right)}\nonumber\\
&=&\pi^{1-2\sigma-2it}\prod_{\pm}\bigg(
(t\pm \mu)^{\sigma-\frac{1}{2}}
g_{\sigma}(t\pm \mu)\bigg)
\exp\bigg(i\sum_{\pm}(t\pm \mu)\log\frac{|t\pm \mu|}{2e}\bigg),
\eea
where $g_{\sigma}(t)$ satisfies $t^{\ell}g_{\sigma}^{(\ell)}(t)\ll_{\ell,\sigma} 1. $
It follows that
\bea\label{upper bound}
\gamma(\sigma+it)
\ll \big(t^2+\mu^2\big)^{\sigma-\frac{1}{2}}\ll t^{2\sigma-1}.
\eea
Now we use \eqref{upper bound} together with $(\varepsilon-it)^{-1}=it^{-1}+O(t^{-2})$ to derive
\bea\label{I(T)-2}
\mathcal{I}(T)
\ll\sup_{x\in[1,\, 2]} \bigg|\sum_{n=1}^{\infty}\frac{\lambda_f(n)}{n^{1+\varepsilon}}
\mathfrak{J}(Xxn,T) \bigg|
+T^{\varepsilon},
\eea
where
\bna
\mathfrak{J}(P,T)=
\int_{\mathbb{R}}P^{it}\gamma(1+\varepsilon-it)
\omega\left(\frac{|t|}{T}\right)\frac{\mathrm{d}t}{t}.
\ena
Making a change of variable $t\rightarrow Tt$, one has
\bna
\mathfrak{J}(P,T)=
\int_{\mathbb{R}}P^{iTt}\gamma(1+\varepsilon-iTt)
\omega\left(|t|\right)\frac{\mathrm{d}t}{t}.
\ena
By \eqref{Gamma-asymptotic} we have
\bna
\mathfrak{J}(P,T)=
\int_{\mathbb{R}}
u(t)
e(\varsigma(t))\mathrm{d} t,
\ena
where $u(t)=\pi^{-1-2\varepsilon}
t^{-1}(t^2T^2-\mu^2)^{1/2+\varepsilon}\omega\left(|t|\right)\prod_{\pm}g_{1+\varepsilon}(-tT\pm\mu)$ and
\bna
\varsigma(t)=\frac{tT}{2\pi}\log (\pi^2P)-\frac{1}{2\pi}\sum_{\pm}(tT\pm \mu)\log\frac{|tT\pm \mu|}{2e}.
\ena
We can apply the stationary phase
analysis to $\mathfrak{J}(P,T)$. Without loss of generality, we consider the case
$t>0$, since the case $t<0$ can be treated similarly.
Note that
\bna
\varsigma'(t)=-\frac{T}{2\pi}\log \frac{t^2T^2-\mu^2}{4\pi^2P}
\ena
and
\bna
\varsigma''(t)=-\frac{T}{2\pi}\frac{2tT^2}{t^2T^2-\mu^2}\asymp T.
\ena
We also have $u^{(j_1)}(t)\ll_{j_1} T^{1+\varepsilon}$ and
$\varsigma^{(j_2)}(t)\ll_{j_2} T$
for any integer $j_1\geq 0$ and $j_2\geq 2$.
Then integration by parts shows that the integral is negligibly small unless
$T\asymp \sqrt{P}$.
The stationary point which is the solution to the equation $\varsigma'(t)=0$ is
$t_0=T^{-1}(\mu^2+4\pi^2P)^{1/2}$.
By Lemma \ref{lemma:exponentialintegral}, we have
\bea\label{stationary phase}
\mathfrak{J}(P,T)=\int_{\mathbb{R}}u(t)e(\varsigma(t))\mathrm{d} t
=T^{1/2+2\varepsilon}F_{\natural}(t_0)e(\varsigma(t_0))
+O_A\Big(T^{-A}\Big),
\eea
where $F_{\natural}(x)$
is an $1$-inert function\,(depending on $A$) supported on $x\asymp 1$.
Plugging \eqref{stationary phase} into \eqref{I(T)-2} and estimating the resulting sum over $n$ trivially,
we obtain
\bea\label{I(T)-3}
\mathcal{I}(T)
\ll T^{1/2+\varepsilon}.
\eea

If $T_1>|\mu|^{1+\varepsilon}$, then $T\gg T_1>|\mu|^{1+\varepsilon}$.
If $T_1\leq|\mu|^{1+\varepsilon}$,
i.e., $X^{1/3}\lambda_f(\Delta)^{-\theta}
\leq|\mu|^{1+\varepsilon}$, then
\bea\label{X upper bound}
X\leq\lambda_f(\Delta)^{3(1/2+\theta)}.
\eea
In this case, for $T\leq|\mu|^{1+\varepsilon}$,
we move the line of integration in \eqref{I(T)}
to $\Re(s)=1/2$ and apply \eqref{Tilde-h}
with $j=1$ to get
\bna
\mathcal{I}(T)&=&\int_{\mathbb{R}}
X^{\frac{1}{2}+it+\varepsilon}\widetilde{h}\left(\frac{1}{2}+it\right)
L\left(\frac{1}{2}+it,f\right)
\omega\left(\frac{|\frac{1}{2}+it+\varepsilon|}{T}\right)\mathrm{d} t\\
&=&-X^{\frac{1}{2}+\varepsilon}
\int_1^2
h'(x)x^{\frac{1}{2}}\int_{\mathbb{R}}
\frac{(xX)^{it}}{1/2+it}
L\left(\frac{1}{2}+it,f\right)
\omega\left(\frac{|\frac{1}{2}+it+\varepsilon|}{T}\right)
\mathrm{d}t\mathrm{d}x.
\ena
Applying the subconvexity bound due to Jutila and
Motohashi \cite{JM},
$$L\left(\frac{1}{2}+it,f\right)\ll(|t|+\mu)^{1/3+\varepsilon},$$
we obtain
\bna
\mathcal{I}(T)\ll X^{1/2+\varepsilon}
\int_{|t|\asymp T}\frac{1}{t}
(|t|+\mu)^{1/3+\varepsilon}\mathrm{d}t
\ll X^{1/2+\varepsilon}\lambda_f(\Delta)^{1/6+\varepsilon},
\ena
 which by \eqref{X upper bound} is bounded
 by $X^{1/3}\lambda_f(\Delta)^{\theta/2+5/12+\varepsilon}$.
This estimate combined with \eqref{I(T)-3}
when plugged into \eqref{dyadic} implies
\bna
\sum_{n\sim X}\lambda_f(n)\ll (\lambda_f(\Delta)X)^{\varepsilon}\eta^{-1/2}
+\eta X^{1+\varepsilon}
+\lambda_f(\Delta)^{1/2-\theta+\varepsilon} X^{1/3}
+\lambda_f(\Delta)^{\theta/2+5/12+\varepsilon}X^{1/3}.
\ena
Take $\eta=X^{-2/3}$ and $\theta=\frac{1}{18}$.
We conclude that
\bna
\sum_{n\sim X}\lambda_f(n)\ll
X^{1/3+\varepsilon}\lambda_f(\Delta)^{4/9+\varepsilon}.
\ena

\bigskip

\bibliographystyle{amsplain}

\end{document}